\begin{document}

\newtheorem*{theorem_A}{Recursive Formula (Thm.~\ref{thm:recursive_formula})}
\newtheorem*{theorem_B}{Gap Theorem (Thm.~\ref{thm:gap_theorem})}
\newtheorem{theorem}{Theorem}[section]
\newtheorem{lemma}[theorem]{Lemma}
\newtheorem{corollary}[theorem]{Corollary}
\newtheorem{conjecture}[theorem]{Conjecture}
\newtheorem{proposition}[theorem]{Proposition}
\newtheorem{question}[theorem]{Question}
\newtheorem*{answer}{Answer}
\newtheorem{problem}[theorem]{Problem}
\newtheorem*{main_theorem}{Main Theorem}
\newtheorem*{claim}{Claim}
\newtheorem*{criterion}{Criterion}
\theoremstyle{definition}
\newtheorem{definition}[theorem]{Definition}
\newtheorem{construction}[theorem]{Construction}
\newtheorem{notation}[theorem]{Notation}
\newtheorem{convention}[theorem]{Convention}
\newtheorem*{warning}{Warning}
\newtheorem*{assumption}{Simplifying Assumptions}

\theoremstyle{remark}
\newtheorem{remark}[theorem]{Remark}
\newtheorem*{apology}{Apology}
\newtheorem{historical_remark}[theorem]{Historical Remark}
\newtheorem{example}[theorem]{Example}
\newtheorem{scholium}[theorem]{Scholium}
\newtheorem*{case}{Case}

\def\SS{\mathcal S}
\def\T{\mathbb T}
\def\N{\mathbb N}
\def\R{\mathbb R}
\def\Z{\mathbb Z}
\def\C{\mathbb C}
\def\D{\mathbb D}
\def\root{\text{root}}

\newcommand{\marginal}[1]{\marginpar{\tiny #1}}

\title{Combinatorics of the Tautological Lamination}
\author{Danny Calegari}
\address{Department of Mathematics \\ University of Chicago \\
Chicago, Illinois, 60637}
\email{dannyc@math.uchicago.edu}

\date{\today}

\begin{abstract}
The {\em Tautological Lamination} arises in holomorphic dynamics as a 
combinatorial model for the geometry of 1-dimensional slices of the Shift Locus.
In each degree $q$ the tautological lamination defines an iterated sequence of 
partitions of $1$ (one for each integer $n$) into numbers of the form 
$2^m q^{-n}$. Denote by $N_q(n,m)$ the number of times $2^mq^{-n}$ arises
in the $n$th partition. We prove a recursion formula for $N_q(n,0)$, and a
gap theorem: $N_q(n,n)=1$ and $N_q(n,m)=0$ for $\lfloor n/2 \rfloor < m < n$.
\end{abstract}

\maketitle

\setcounter{tocdepth}{1}
\tableofcontents

\section{Introduction}

The {\em Tautological Lamination}, introduced in \cite{Calegari_sausage}, is a
combinatorially defined object which gives a holomorphic model for certain 1 complex
dimensional slices of the {\em shift locus}, a fundamental object in the theory
of holomorphic dynamics. 
There is a shift locus $\SS_q$ for each degree $q$; it is the space of
depressed monic polynomials $z^q + a_2 z^{q-2} + a_3 z^{q-3} + \cdots + a_q$ in a 
complex variable $z$ (thought of as a subset of $\C^{q-1}$ with coordinates $a_j$)
for which every critical point is in the attracting basin of infinity.

There is a tautological lamination $\Lambda_T(C)$ 
for each degree $q$ and for each choice of 
{\em critical data} $C$ (certain holomorphic parameters which determine the slice of
$\SS_q$). For the complex dynamics reader: the tautological lamination records the
combinatorics of the 1 complex dimensional slices of
the shift locus where $q-2$ critical B\"ottcher coordinates are fixed, and one critical
point (with a smaller escape rate than any of the others) is allowed to vary.

Each tautological lamination determines a sequence of operations, called
pinching, which cut the unit circle $S^1$ up into pieces and reglue them into a collection
of smaller circles, denoted $S^1 \mod \Lambda_{T,n}(C)$. Subsequent operations refine
the previous ones, so each component of $S^1 \mod \Lambda_{T,n}(C)$ is cut up and
reglued into a union of components of $S^1 \mod \Lambda_{T,n+1}(C)$. The precise 
cut and paste operations depend on $C$, but the set of {\em lengths} of the components
of $\Lambda_{T,n}(C)$ (counted with multiplicity) 
depends only on $n$ and the degree $q$. These lengths are all
of the form $2^m q^{-n}$ for various non-negative integers $m$, and we can  
define $N_q(n,m)$ to be the number of components of $S^1 \mod \Lambda_{T,n}(C)$ of
length $2^m q^{-n}$.

The {\em short components} of $S^1 \mod \Lambda_{T,n}(C)$ are those with length 
$q^{-n}$. The number of short components is $N_q(n,0)$. 
Our first main result is an exact recursive formula for $N_q(n,0)$ (which can
be solved in closed form):

\begin{theorem_A}
$N_q(n,0)$ satisfies the recursion 
$N_q(0,0) = 1$, $N_q(1,0)=(q-2)$ and
$$N_q(2n,0) = qN_q(2n-1,0) \text{ and } N_q(2n+1,0) = qN_q(2n,0)-2N_q(n,0)$$
and has the generating function $(\beta(t)-1)/qt$ where
a closed form for $\beta(t)$ is given in Proposition~\ref{proposition:closed_form}.
\end{theorem_A}

At the other extreme, there is a unique largest component of $S^1 \mod \Lambda_{T,n}(C)$
of length $2^n q^{-n}$. Our second main result is a `gap' theorem:

\begin{theorem_B}
$N_q(n,m)=0$ for $\lfloor n/2 \rfloor< m < n$.
\end{theorem_B}

Both the recursive formula for $N_q(n,0)$ and the existence of a gap were observed
experimentally. Our main motivation in writing this paper was to give a rigorous
proof of these observations.

One of the striking things about the tautological lamination is the existence of a
rather mysterious bijection between the components of $S^1 \mod \Lambda_{T,n}(C)$ and
some seemingly unrelated objects called {\em tree polynomials}, introduced in
\S~\ref{section:tree_polynomials}. This bijection is a corollary of one of the 
main theorems of \cite{Calegari_sausage}, and the proof there is topological. We
know of no direct combinatorial proof of this bijection, and raise the question
of whether one can be found.

\section{Unbordered words}

Some words end like they begin, such as {\tt abra$\cdot$cad$\cdot$abra} and 
{\tt b$\cdot$aoba$\cdot$b}. Such words are said to be {\em bordered}.
Others (most) are {\em unbordered}.
A {\em border} is a {\em nonempty, proper} suffix of some word which 
is equal to a prefix.

If a word contains a border, then it contains one of at most half the
length (for, a border of more than half the length will itself be bordered 
and now we can apply induction).

If $W$ is a word, let's denote its length by $|W|$.
If $W$ is an unbordered word of even length, we can write it as $W=W_1W_2$
where $|W_1|=|W_2|$, and then for every letter {\tt c} the word $W_1\text{\tt c}W_2$ is
also unbordered. If $W$ is an unbordered word of odd length, we can write it
as $W=W_1W_2$ where $|W_1|+1=|W_2|$, and then for every letter {\tt c} the word
$W_1\text{\tt c}W_2$ is unbordered {\em except} when $W_1\text{\tt c}=W_2$. Thus:
if $a_n$ denotes the number of unbordered words of length $n$ in a $q$-letter
alphabet, then $a_0=1$ (there is one empty word) and 
$$a_{2n+1}=qa_{2n} \text{ and } a_{2n}=q a_{2n-1} - a_n$$

Let's define a generating function $\alpha(t):=\sum_{n=0}^\infty a_n t^n$.
Then the recurrence becomes
the functional equation
$$\alpha(t) = \frac{2- \alpha(t^2)} {1-qt}$$
Iteratively substituting $t^2$ for $t$ and being careful about convergence, 
one obtains the following formula: 
$$\alpha(t) = 1 + q\sum_{j=0}^\infty (-1)^jt^{2^j}\prod_{i=0}^j \frac {1} {(1- q\cdot t^{2^i})}$$

These facts are not new. Unbordered words have been studied by many authors. 
They are 
also called {\em bifix-free}, and {\em primary} (neither of these terms seem very descriptive to us). 
As far as we know they were first considered by Silberger \cite{Silberger}; 
see also e.g.\/ \cite{Lothaire}, p.~153.

A minor variation on this idea is as follows. Let's take for our $q$-letter alphabet
the elements of $\Z/q\Z$. If $W$ is a word in the alphabet, let $W'$ denote the
result of adding 1 to the first letter (digit). Say a word is {\em 1-unbordered} if
no suffix $S$ is equal to a prefix $P$ or to $P'$ (and say it is 1-bordered otherwise).
Then reasoning as above gives:

\begin{proposition}[Recursion]\label{proposition:recursion}
Let $b_n$ denote the number of 1-unbordered words of length $n$ in a $q$-letter alphabet.
Then $b_0=1$ and
$$b_{2n+1}=qb_{2n} \text{ and } b_{2n}=qb_{2n-1} - 2b_n$$
\end{proposition}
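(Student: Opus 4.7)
The plan is to follow the pattern of the argument for ordinary unbordered words, keeping careful track of the two ``types'' a 1-border can take: ordinary ($S=P$) or shifted ($S=P'$). Two ingredients are needed. First, a half-length lemma: every 1-bordered word of length $n$ has a 1-border of length at most $\lfloor n/2 \rfloor$. For the ordinary type this is the classical fact already invoked earlier. For a shifted 1-border of length $k>n/2$, I would note that the equations $W[n-k+1]=W[1]+1$ and $W[n-k+i]=W[i]$ for $2\le i\le k$ force the length-$k$ suffix $B := W[n-k+1..n]$ to have an \emph{ordinary} border of length $2k-n$; a shorter ordinary border of $B$ (which exists by the classical half-length property applied to $B$) pulls back, via the same shift relations, to a shifted 1-border of $W$ of length $\le \lfloor k/2 \rfloor \le \lfloor n/2 \rfloor$.

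Given the lemma, the recursion $b_{2n+1}=qb_{2n}$ is immediate: write a 1-unbordered $W$ of length $2n$ as $W_1 W_2$ with $|W_1|=|W_2|=n$ and insert any letter $c$ to form $V = W_1 c W_2$; any 1-border of $V$ has length $\le n$ by the half-length lemma, so its prefix and suffix lie inside $W_1$ and $W_2$ respectively and descend to a 1-border of $W$. All $q$ insertions therefore preserve 1-unboredness.

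For the main recursion $b_{2n} = qb_{2n-1}-2b_n$, start with a 1-unbordered $W = W_1 W_2$ of length $2n-1$ (with $|W_1|=n-1$, $|W_2|=n$) and insert $c$ at position $n$ to get $V = W_1 c W_2$. Again any 1-border of $V$ of length $\le n-1$ descends to one of $W$, so the only obstruction is a length-$n$ 1-border matching the suffix $W_2$ against $W_1 c$ or $(W_1 c)'$. The two bad conditions (A) $W_2=W_1 c$ and (B) $W_2=(W_1 c)'$ are disjoint (their coincidence would force $W_1[1]=W_1[1]+1$), and in each case $c$ is uniquely determined by $W$. Setting $U:=W_1$ and $U':=(U[1]+1,U[2],\ldots,U[n-1])$, the number of bad pairs is thus the number of 1-unbordered $W$ of the form $UUc$ plus the number of the form $UU' c$.

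The last step is to show each of these counts equals $b_n$, via the manifest bijections $UUc \mapsto Uc$ and $UU' c \mapsto Uc$. Because $U$ and $U'$ agree on positions $2,\ldots,n-1$, the last $m$ letters of $UUc$ and of $UU' c$ coincide for any $m \le n-1$, so both produce exactly the same 1-border conditions of length $\le n-1$ as $Uc$ does. Combined with the half-length lemma applied to all three lengths, this yields $UUc$ 1-unbordered $\iff Uc$ 1-unbordered $\iff UU' c$ 1-unbordered, so each type contributes $b_n$ bad pairs for a total of $2b_n$, and the recursion follows. I expect the trickiest part to be the shifted case of the half-length lemma, where the asymmetric role of position $1$ must be tracked carefully through the construction of the induced ordinary border of $B$.
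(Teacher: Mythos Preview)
Your proposal is correct and is exactly the intended argument: the paper gives no proof beyond ``reasoning as above gives,'' and what you have written is precisely that reasoning, carried out carefully for the $1$-unbordered variant. The insertion/deletion bijection, the identification of the two bad cases $W_2=W_1c$ and $W_2=(W_1c)'$, and the bijections $UUc\leftrightarrow Uc$ and $UU'c\leftrightarrow Uc$ are all sound; the key observation that suffixes of length $\le n-1$ of $UUc$, $UU'c$, and $Uc$ coincide (because $U$ and $U'$ differ only in position $1$) is exactly what makes the last step work.

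The one genuinely new ingredient beyond the ordinary-border case is the half-length lemma for shifted borders, and your treatment of it is right. To spell out the verification you sketched: writing the shifted-border condition as $W[n-k+i]=W[i]+[i{=}1]$ for $1\le i\le k$, the suffix $B=W[n{-}k{+}1..n]$ satisfies $B[j]=B[j+(n-k)]$ for $1\le j\le 2k-n$ (since here $j\ge 1$ forces $n-k+j\ge 2$, so the shift relation is the unshifted one), giving $B$ an ordinary border of length $2k-n$. The classical lemma then yields an ordinary border of $B$ of some length $r\le\lfloor k/2\rfloor$, and composing $B[i]=B[k-r+i]$ with $B[i]=W[n-k+i]=W[i]+[i{=}1]$ gives $W[n-r+i]=W[i]+[i{=}1]$, i.e.\ a shifted $1$-border of $W$ of length $r\le\lfloor k/2\rfloor\le\lfloor n/2\rfloor$. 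So the place you flagged as trickiest does go through cleanly.
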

 
Define the generating function $\beta(t):=\sum_{n-0}^\infty b_n t^n$. Then
$$\beta(t) = \frac{3 - 2\beta(t^2)} {1-qt}$$

The following `closed form' for $\beta(t)$ (and the argument below) 
was kindly provided by Frank Calegari:

\begin{proposition}[Closed form solution]\label{proposition:closed_form}
The generating function $\beta(t)$ converges for small $|t|$, and can be meromorphically
continued throughout the unit disk with a simple pole at every $2^k$th root of $1/q$.

Define a sequence of integers $h(n)$ by
$$h(0):=1 \text{ and } h(n):=(-q)^{s(n)}(1-(-2)^{k(n)}) \text{ for }n>0$$
where $2^{k(n)}$ is the biggest power of $2$ dividing $n$, and $s(n)$ is the sum of
the binary digits of $n$. Then throughout the unit disk,
$$\beta(t)= \left( \sum_{n=0}^\infty h(n) t^n \right) \prod_{j=0}^\infty \frac{1} {(1-qt^{2^j})}$$
\end{proposition}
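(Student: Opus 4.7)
The plan is to clear the infinite product on the right-hand side. Set $P(t) := \prod_{j\ge 0}(1-qt^{2^j})$ and $H(t) := P(t)\beta(t)$. The self-similarity $P(t) = (1-qt)P(t^2)$ lets us rewrite the functional equation $\beta(t)(1-qt) = 3 - 2\beta(t^2)$ as
\[
H(t) + 2H(t^2) = 3P(t^2).
\]
Reading off coefficients, this uniquely determines $H$ as a formal power series: $H(0)=1$, odd coefficients vanish, and $[t^{2m}]H = 3[t^m]P - 2[t^m]H$ for $m \ge 1$.

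The bulk of the argument is then to verify that $\tilde H(t) := \sum_n h(n) t^n$ satisfies the same recursion. Expanding the product for $P$ and invoking the unique binary representation of integers gives $[t^m]P(t) = (-q)^{s(m)}$. For odd $n$, $k(n)=0$, so $h(n) = 0$ automatically. For $n = 2m$ with $m = 2^a m'$ ($m'$ odd), a one-line calculation shows the $(-2)^a$ contributions cancel and
\[
h(2m) + 2h(m) = 3(-q)^{s(m')} = 3\,[t^m]P(t).
\]
By the uniqueness noted above, $\tilde H = H$, establishing the closed form as an identity of formal power series.

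Turning to analytic content: $|h(n)|$ grows at most polynomially in $n$ (since $s(n) \le 1+\log_2 n$ and $2^{k(n)} \le n$), so $\tilde H$ is holomorphic on the open unit disk; the product defining $P$ converges there because $|t^{2^j}|$ decays rapidly. The zeros of $P$ inside the unit disk are precisely the $2^j$-th roots of $1/q$ for $j \ge 0$; each factor $(1-qt^{2^j})$ contributes $2^j$ simple zeros of modulus $q^{-1/2^j}$, and these moduli are pairwise distinct, so no two zeros of $P$ coincide. Hence $\beta = \tilde H/P$ extends meromorphically throughout $|t|<1$ with at most simple poles at the claimed points.

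The one place extra care is required is verifying that each such pole is genuine, i.e., that $\tilde H$ does not vanish at any zero of $P$. My preferred approach is inductive via the functional equation itself: at $t = 1/q$, the relation $(1-qt)\beta(t) = 3 - 2\beta(t^2)$ produces a simple pole provided $\beta(1/q^2) \ne 3/2$, a single finite check; and for $t_0$ a $2^j$-th root of $1/q$ with $j \ge 1$, the same relation transports the pole of $\beta$ at $t_0^2$ (which is a $2^{j-1}$-th root of $1/q$) to a simple pole at $t_0$. I expect this analytic verification of pole simplicity to be the main obstacle beyond the purely formal calculation.
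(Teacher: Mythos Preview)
Your approach is essentially the same as the paper's: both multiply through by $P(t)=\prod_{j\ge 0}(1-qt^{2^j})$, reduce the functional equation to $H(t)+2H(t^2)=3P(t^2)$, and verify this coefficient-wise via $h(2m)+2h(m)=3(-q)^{s(m)}$ together with $\sum_n(-q)^{s(n)}t^n=P(t)$. The only substantive difference is in the base case of the pole analysis: rather than your proposed finite check $\beta(1/q^2)\ne 3/2$, the paper observes directly from the growth rate of the (nonnegative) coefficients $b_n$ that $\beta$ has a genuine singularity at $t=1/q$, which the functional equation then forces to be a simple pole---this sidesteps the numerical verification you flagged as the main remaining obstacle.
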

\begin{proof}
From the growth rate of the coefficients it's clear that $\beta(t)$ has a pole
at $q^{-1}$ and converges uniformly throughout the open disk of radius $q^{-1}$. 
It follows that $\beta(t^2)$ converges uniformly throughout the open disk 
of radius $q^{-1/2}$. Using the identity $(1-qt)\beta(t)= 3-2\beta(t^2)$ 
and induction, the first claim is proved.

Let's define $H(t):=\sum_{n=0}^\infty h(n)t^n$ and
$B(t):=H(t) \prod_{j=0}^\infty (1-qt^{2^j})^{-1}$.
Then the proposition will follow if we can show that $B(t)$ satisfies
$B(t)(1-qt)=3-2B(t^2)$.
First observe that $h(n)=0$ if $n$ is odd; and furthermore,
$$\frac {2h(n) + h(2n)}{3} = \frac {(-q)^{s(n)}}{3}\left(3 - 2(-2)^{k(n)} + 2(-2)^{k(n)}\right) = (-q)^{s(n)}$$

The required identity is equivalent to
$$(1-qt)B(t)\prod_{k=1}^\infty (1-qt^{2^k}) = (3-2B(t^2))\prod_{k=1}^\infty (1-qt^{2^k})$$
or
$$B(t)\prod_{k=0}^\infty (1-qt^{2^k}) = (3-2B(t^2))\prod_{k=0}^\infty (1-q(t^2)^{2^k})$$
or
$$H(t)+2H(t^2) = 3\prod_{k=0}^\infty (1-q(t^2)^{2^k})$$
Since $h(n)=0$ for $n$ odd this is equivalent to
$$\sum_{n=0}^\infty h(2n)t^{2n} + \sum_{n=0}^\infty 2h(n)t^{2n} = 3\prod_{k=0}^\infty (1-q(t^2)^{2^k})$$
Replacing $t^2$ by $t$ and using $h(2n)+2h(n)=3(-q)^{s(n)}$ this is equivalent to
$$\sum_{n=0}^\infty (-q)^{s(n)}t^n = \prod_{k=0}^\infty (1-qt^{2^k})$$
which is clear.
\end{proof}

The definition of 1-unbordered words would seem utterly unmotivated --- except that it 
just so happens that they arise naturally in an entirely different context
which is the subject of the rest of the paper.

\section{Tautological Laminations}\label{section:lamination}

\subsection{Laminations}

A {\em leaf} is an unordered pair of distinct points in a circle $S$. 
Two leaves in $S$ are {\em linked} if they are disjoint (as subsets
of $S$) and each separates the other in $S$. 
A {\em lamination of $S$} is a set of leaves in $S$, no two of which are linked.
A finite lamination is one with finitely many leaves. 

If $\Lambda$ is a finite lamination of $S$ we may {\em pinch} $S$ along $\Lambda$.
This means that we quotient each leaf to a point, so that $S$ collapses to a
`tree' of smaller circles (sometimes called a {\em cactus}), 
and then split this tree apart into its constituent circles.
We denote the result by $S \mod \Lambda$. 
See Figure~\ref{pinch}.

If there is a Riemannian metric on $S$
then we get a Riemannian metric on $S \mod \Lambda$, so it makes sense to talk about the
{\em length} of the components of $S \mod \Lambda$, and observe that the sum of these
lengths is equal to the length of $S$.

\begin{figure}[htpb]
\centering
\includegraphics[scale=0.25]{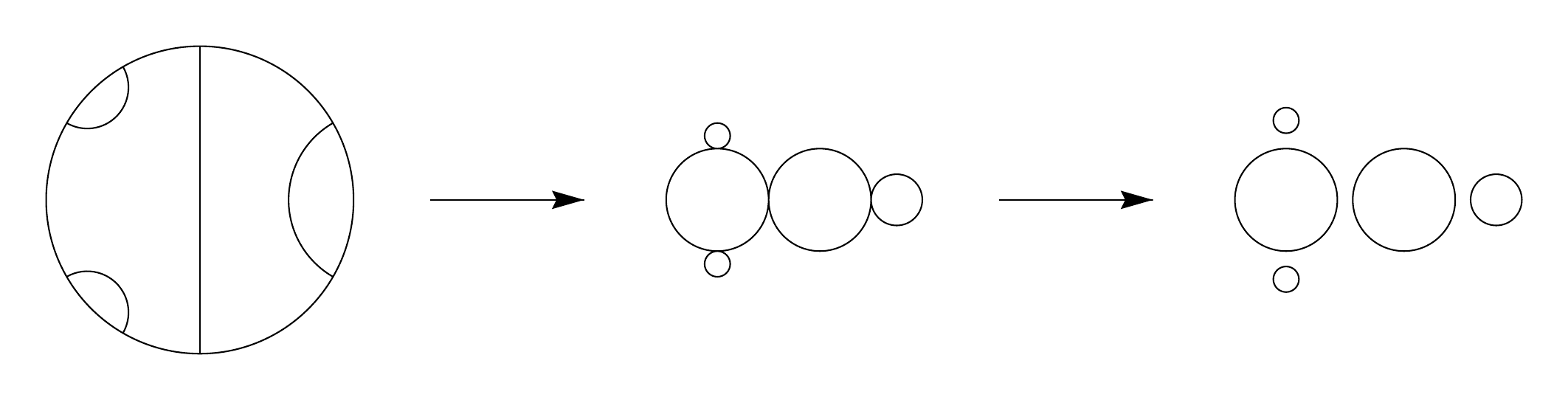}
\caption{Pinching a circle along a finite lamination to obtain a collection 
of smaller circles.}
\label{pinch}
\end{figure}

Now suppose $\Lambda$ is the increasing union of $\Lambda_n$ (for $n=1$ to $\infty$)
where each $\Lambda_n$ is finite. The {\em depth $n$ leaves} are those in 
$\Lambda_n - \Lambda_{n-1}$ and for each $n$ we can form $S \mod \Lambda_n$
for each $n$ and obtain in this way a sequence of increasingly refined partitions of $|S|$.

\subsection{Tautological Elaminations and Complex Dynamics}

We are interested in some naturally occurring laminations called 
{\em Tautological Laminations}. These objects were introduced in \cite{Calegari_sausage}
to study the geometry and topology of the {\em shift locus} --- a certain 
parameter space that arises naturally in holomorphic dynamics. For example, in degree
2, the shift locus is the complement (in $\C$) of the Mandelbrot set.

The Tautological Laminations in \cite{Calegari_sausage} have some extra structure ---
they are actually `extended laminations' or {\em elaminations}. If we identify the
circle $S^1$ with the boundary of the closed unit disk $\bar{\D} \subset \C$, 
leaves in a lamination $\Lambda$ corresponds to (infinite, unoriented) geodesics
in $\D$ thought of as the hyperbolic plane in the Poincar\'e disk model. The
unlinking property of leaves in a lamination corresponds to the condition that
the geodesics in $\D$ they span are disjoint (except at their ideal `endpoints').
In an elamination these geodesics extend beyond $S^1$ to a pair of radial
segments in $\C - \bar{\D}$. An elamination determines a lamination of $S^1$ 
(or equivalently, a geodesic lamination of $\D$) by
forgetting these `extended' segments.

As mentioned in the introduction, the tautological lamination records the
combinatorics of the 1 complex dimensional slices of
the shift locus where $q-2$ critical B\"ottcher coordinates are fixed, and one critical
point (with a smaller escape rate than any of the others) is allowed to vary.
The extra structure of the tautological elamination records not only the combinatorics,
but the holomorphic structure on these slices.

A finite elamination may be {\em pinched}, giving rise to a planar Riemann surface
which may be (partially) compactified by a finite collection of circles, which
are precisely the result of pinching the associated lamination of $S^1$.
Figure~\ref{elam_and_pinched} gives an example, approximating an infinite 
(tautological) elamination.

\begin{figure}[htpb]
\centering
\includegraphics[scale=0.4]{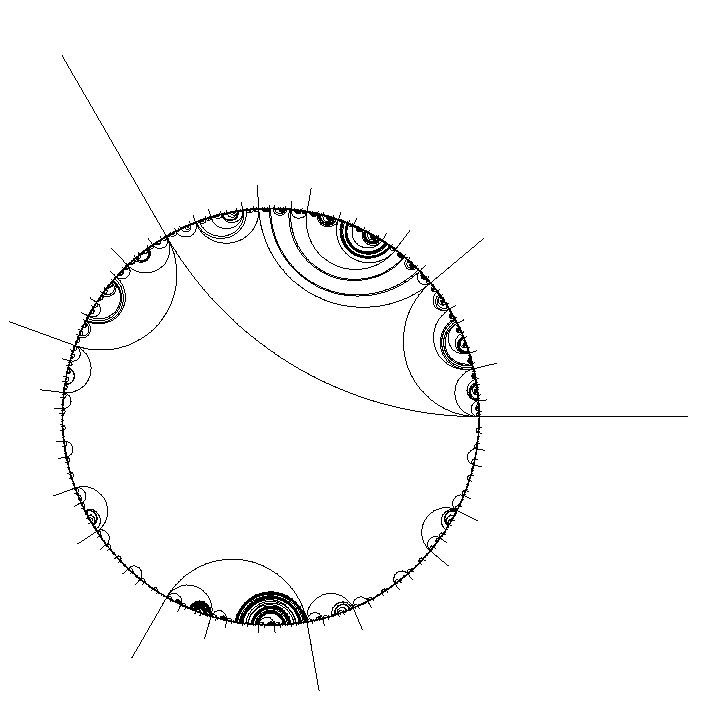} \quad \quad \quad
\includegraphics[scale=0.3]{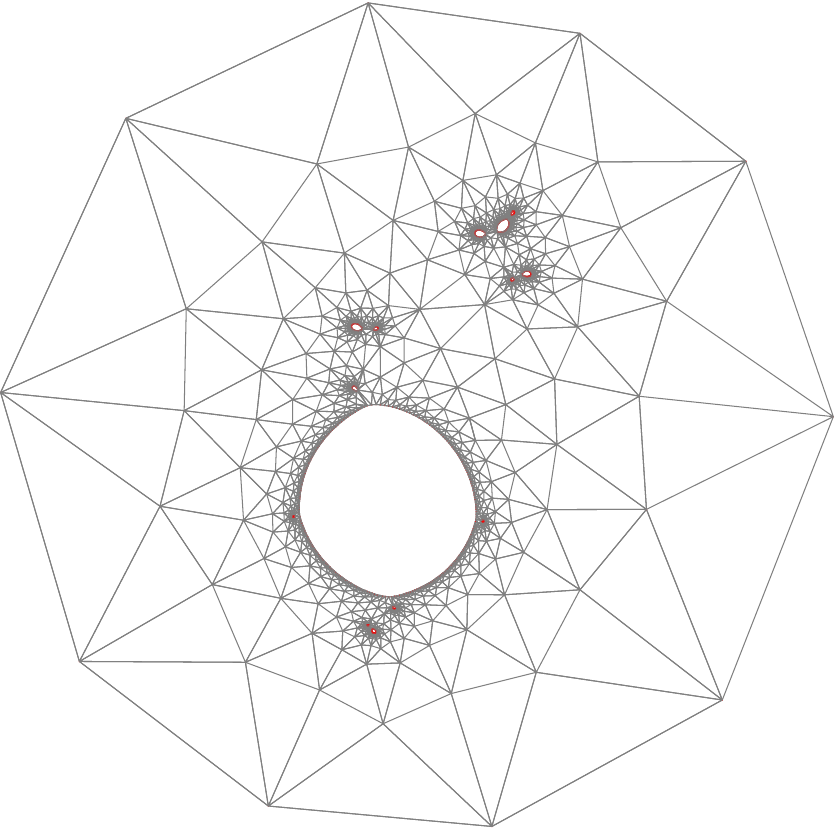}
\caption{A finite elamination approximating the Degree 3 Tautological Elamination for $z=2$, 
and the result of pinching}
\label{elam_and_pinched}
\end{figure}

To orient the reader and to motivate the remainder of this paper, let us now
describe the relationship between the tautological elamination
and the holomorphic geometry of the shift locus, in the special case of degree 3. 
A depressed cubic polynomial $f(z):=z^3+pz+q$ is in the shift locus $\SS_3$ 
if the critical points $c_1,c_2$ (not necessarily distinct)
are in the basin of attraction of infinity.
These critical points have canonical {\em B\"ottcher coordinates} $C_1$, $C_2$, 
whose absolute
value is well-defined and strictly greater than $1$, 
and whose arguments are multi-valued, where
different values differ by multiples of $2\pi/3$. For $z \in \C - \bar{\D}$
let us define the {\em B\"ottcher slice} $B(z)$ of $\SS_3$ 
to be the 1-complex dimensional subset
where $C_1=\lbrace z, e^{2\pi i/3}z\rbrace$ and
$|C_1|>|C_2|$. The open dense subset of $\SS_3$ for which the critical points
are distinct and their B\"ottcher coordinates have distinct absolute values
is foliated by such B\"ottcher slices, and in fact the B\"ottcher slices form
the fibers of a (topological) fiber bundle over $\C - \bar{\D}$.
Associated to each $z$ is a tautological elamination $\Lambda_T(z)$, and
the B\"ottcher slice $B(z)$ is obtained from $\Lambda_T(z)$ by pinching.

Figure~\ref{elam_and_pinched} depicts the tautological elamination 
$\Lambda_T(z)$ for $z=2$ and
the Riemann surface obtained from $\Lambda_T(z)$ by pinching.
The laminations of $S^1$ associated to $\Lambda_T(z)$ are the main objects of
interest throughout this section; they depend only on the argument of $z$.
We shall give them a precise definition in \S~\ref{taut_lam_subsection}.

\begin{figure}[htpb]
\centering
\includegraphics[scale=0.25]{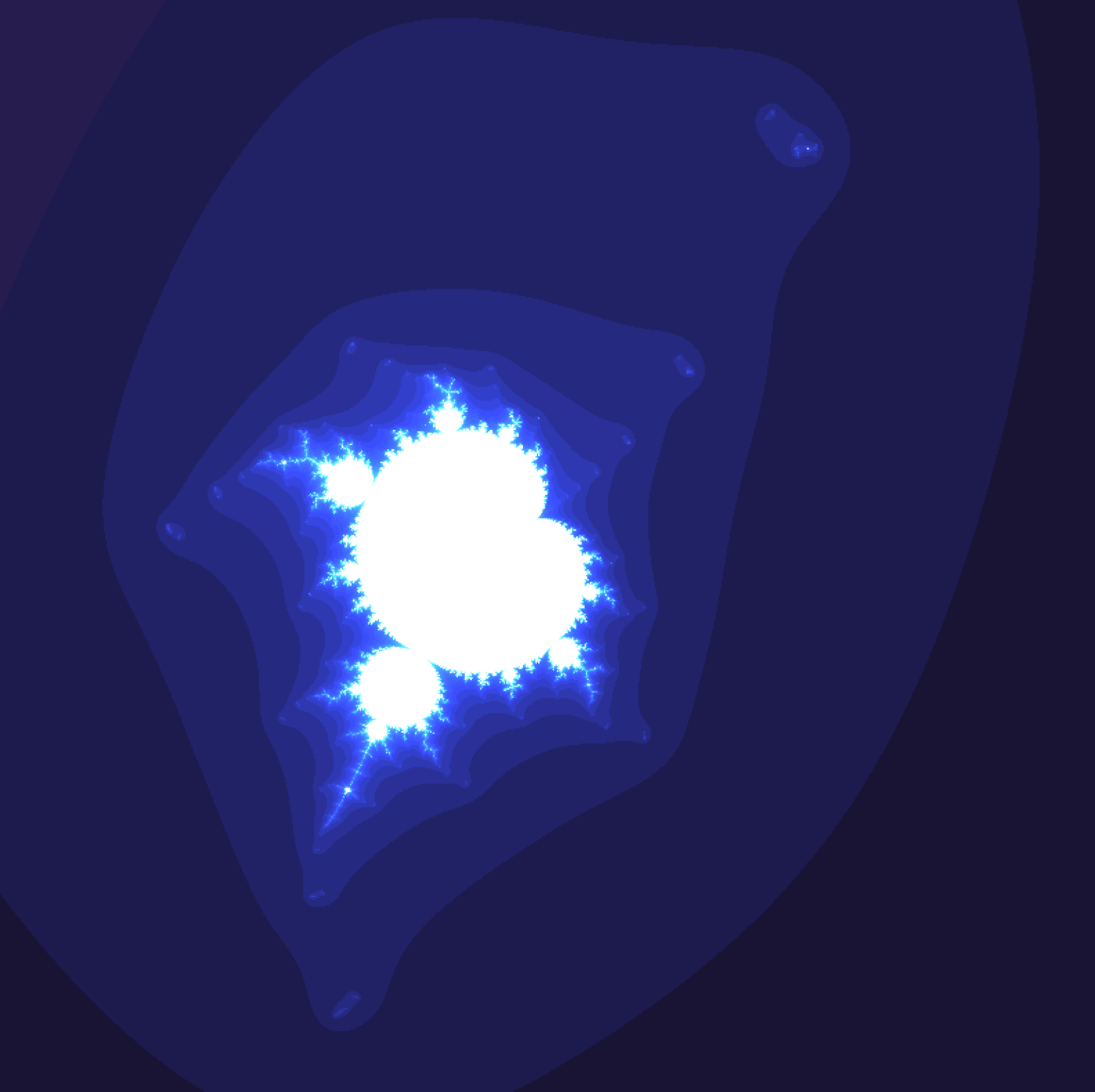}
\caption{Part of the degree 3 Shift locus (in blue) in a coordinate slice $f(z)=z^3+pz+1$.}
\label{approx_slice}
\end{figure}

It is computationally difficult to transform from B\"ottcher coordinates to
polynomial coordinates. Fortunately, because the shift locus is more or less foliated by
B\"ottcher slices, we may obtain a qualitatively reasonable picture of a
B\"ottcher slice by instead giving (part of) a `coordinate slice' of $\SS_3$
consisting of polynomials $z^3+pz+q$ where the linear term $q$ is fixed, at
least in a region where such a coordinate slice lies close to a B\"ottcher slice. 
Figure~\ref{approx_slice} is (part of) a `coordinate slice' of $\SS_3$
parameterizing shift polynomials of the form $z^3+pz+1$. 
There is one `large' continent, which resembles a
lopsided Mandelbrot, surrounded by a few visible small islands; and there is a little
archipelago to the northeast; compare with Figure~\ref{elam_and_pinched}.

There is a refinement of the tautological elamination (called the
{\em completed} tautological elamination) which (conjecturally) parameterizes the cut
points on the components of the complement of $\SS_q$ in a B\"ottcher slice. When
$q=2$ this recovers Thurston's combinatorial model for the cut points in the
Mandelbrot set. For a definition see \cite{Calegari_sausage}, \S~8.7.

\subsection{Degree 3, a worked example}

Tautological Laminations are laminations of the unit circle, which we normalize as
$S^1=\R/\Z$ so that it has length $1$. Tautological Laminations depend on a degree
$q\ge 2$ and a continuous parameter $C$ (morally, a vector of $q-2$ arguments of 
B\"ottcher coordinates) which, for $q=3$, is encoded by a single 
angle $\theta\in \R/\Z$.

Although the laminations depend on the parameter, the result of pinching the circle
to any finite depth does not. Thus, for each degree $q$ and each depth $n$ we obtain a
{\em partition} of $1$ into a vector of lengths of the components of $S^1 \mod \Lambda_n$.
These lengths are all integer multiples of $q^{-n}$ (in fact, they are of the form
$2^m q^{-n}$ for various $m$). Our goal is to count the number of
components of length exactly $q^{-n}$ (the {\em short components}), and the main result
of this section (Theorem~\ref{thm:recursive_formula}) gives the generating
function for the number of short components in degree $n$, and shows that it is
related in a rather simple way to the function $\beta(t)$ from 
Proposition~\ref{proposition:closed_form}.

We shall first give an ad hoc (though precise) 
definition in the special case $q=3$
and work out a few examples by hand.
Multiplication by 3 gives a map from $S^1$ to itself. If $\lambda$ is the leaf 
$\lbrace p,q\rbrace$ for distinct points $p,q \in S^1$ then $3\lambda$ is the leaf
$\lbrace 3p,3q \rbrace$.
For any $\theta\in S^1$ let $L(\theta)$ denote the leaf $\lbrace\theta,\theta+1/3\rbrace$.
By abuse of notation we let $L^-(\theta)$ be the limit of leaves $L(\theta-\epsilon)$
as $\epsilon \to 0$ from above, and we say that a leaf $\lambda$ {\em links}
$L^-(\theta)$ if it links $L(\theta-\epsilon)$ for all sufficiently small positive
$\epsilon$. Likewise, we let $L^+(\theta)$ be the limit of leaves $L(\theta+\epsilon)$
as $\epsilon \to 0$ from above.

\begin{definition}[Ad hoc definition, degree 3]
There will be one depth $n$ leaf of the tautological lamination
$\Lambda_T$ for each $x\in [1/3,2/3)$ for which $3^nx=0$. We claim
there is a unique $y \in S^1$ such that
\begin{enumerate}
\item{$3^ny=1/3$; and}
\item{if $\lambda$ denotes the leaf $\lbrace x,y\rbrace$ then $3^m\lambda$ does not
link $L^-(0)$ or $L^+(x)$ for $m=0,1,2,\cdots,n-1$.}
\end{enumerate}
Then $\lbrace 3x,3y\rbrace$ is a depth $n$ leaf of $\Lambda_T$, and all depth $n$ leaves arise
this way.
\end{definition}

\begin{example}[Depth 1]
The only $x\in [1/3,2/3)$ with $3x=0$ is $x=1/3$. The only $y$ with $3y=1/3$ and for which
$\lbrace 1/3,y\rbrace$ does not link $L^-(0)$ or $L^+(1/3)$ is $y=7/9$. Thus 
$\lbrace x,y\rbrace = \lbrace 1/3,7/9\rbrace$ and the leaf $3\lbrace x,y\rbrace = \lbrace 0, 1/3\rbrace$
is the unique depth 1 leaf of $\Lambda_T$. Thus $S^1 \mod \Lambda_T$ to depth 1
has two components of length $1/3$ and $2/3$ respectively.
\end{example}

\begin{example}[Depth 2]
For $x\in [1/3,2/3)$ with $9x=0$ we must have one of $x=1/3, 4/9, 5/9$. For $x=1/3$ we may
check for $\lambda=\lbrace 1/3,19/27\rbrace$ that $\lambda$ and $3\lambda$ do
not link $L^-(0)$ or $L^+(1/3)$. Likewise for $\lambda = \lbrace 4/9,22/27\rbrace$ that
$\lambda$ and $3\lambda$ do not link $L^-(0)$ or $L^+(4/9)$, and for
$\lambda = \lbrace 5/9,25/27\rbrace$ that $\lambda$ and $3\lambda$ do not link
$L^-(0)$ or $L^+(5/9)$. Thus the unique depth 2 leaves of $\Lambda_T$ are
$\lbrace 0,1/9\rbrace$, $\lbrace 1/3,4/9\rbrace$ and $\lbrace 2/3,7/9\rbrace$.
Thus $S^1\mod \Lambda_T$ to depth 2 has 3 components of length $1/9$, 1 component of length
$2/9$, and 1 component of length $4/9$.
\end{example}

\begin{example}[Depth 3]
For $x\in [1/3,2/3)$ with $27x=0$ we must have one of $x=1/3, 10/27, 11/27, \cdots , 17/27$.
Let's do one example. For $x=11/27$ we want $y$ with $27y=1/3$ so
for $\lambda = \lbrace 11/27,y\rbrace$ that $\lambda$, $3\lambda$, $9\lambda$ do not
link $L^-(0)$ or $L^+(11/27)$. One might naively guess (based on the examples in depth 1 and depth 2
in which $y=x+3^{-1}+3^{-n-1}$ is always the correct choice) that
$y=61/81$ would work, but $9\lbrace 11/27,61/81\rbrace = \lbrace 2/3, 7/9\rbrace$ which links
$L(11/27)=\lbrace 11/27, 20/27\rbrace$. In fact $y=58/81$, and $3\lbrace 11/27,58/81\rbrace =
\lbrace 2/9,4/27\rbrace$ is a depth 3 leaf of $\Lambda_T$.

One may check that $S^1 \mod \Lambda_T$ to depth 3 has $7$ components of length $1/27$,
$6$ components of length $2/27$, and 1 component of length $8/27$.
\end{example}

The set of leaves to depth 3 is
\begin{enumerate}
\item{$\lbrace 0,1/3\rbrace$;}
\item{$\lbrace 0,1/9\rbrace$, $\lbrace 1/3,4/9\rbrace$, $\lbrace 2/3,7/9\rbrace$;}
\item{$\lbrace 0,1/27\rbrace$, $\lbrace 1/9,7/27\rbrace$, $\lbrace 2/9,4/27\rbrace$,
$\lbrace 1/3,10/27\rbrace$, $\lbrace 4/9, 13/27\rbrace$, $\lbrace 5/9,16/27\rbrace$,
$\lbrace 2/3,19/27\rbrace$, $\lbrace 7/9, 22/27\rbrace$, $\lbrace 8/9, 25/27\rbrace$.}
\end{enumerate}
See the left side of Figure~\ref{elam_and_pinched}.

Continuing out to greater depth, the number of components of $S^1 \mod \Lambda_T$ to depth $n$
of length $3^{-n}$ is $1,3,7,21,57,171,499$ and so on.

\subsection{Tautological Laminations}\label{taut_lam_subsection}

Let us now give a more precise definition. Fix a degree $q$ which is an integer $\ge 2$. 
Multiplication by $q$ defines a degree $q$ map from $S^1$ to itself; if $\lambda$ is a leaf whose points
do not differ by a multiple of $1/q$ then it makes sense to define the leaf $q\lambda$.
Let $C$ be a finite lamination $C$ consisting of $q-2$ leaves $C_1,\cdots,C_{q-2}$
such that for each $j$ the points of $C_j$ differ by $1/q$. Thus we can write
$C_j = \lbrace \theta_j, \theta_j + 1/q\rbrace$ for some $\theta_j \in S^1$.
For simplicity we assume that $C$ is {\em generic}, meaning that the $\theta_j$ are
irrational and irrationally related.

The quotient $S^1 \mod C$ is a union of $q-1$ circles, $q-2$ of them of length
$1/q$ and one of length $2/q$; we refer to this as the {\em big circle} and denote it 
$B'$. The preimage of $B'$ in $S^1$ is a finite collection of arcs of 
$S^1$ bounded by points in leaves of $C$; denote this $B$. Note that the projection
from $B$ to $B'$ is 1--1 away from points in leaves of $C$.

We shall now define the depth $n$ leaves of the tautological lamination 
$\Lambda_T(C)$. Let $x \in B$ be a point for which $q^nx=\theta_j$; this $x$ will
be a point of a depth $n$ leaf of {\em type $j$}. Let $z \in B$
be the unique point for which the projections $x',z' \in B'$ are antipodal (i.e.\/
they are distance $1/q$ apart). Define a finite lamination $C(x)$ to be the union of $C$
together with the leaf $\lbrace x,z\rbrace$.

Now, $S^1 \mod C(x)$ is a union of $q$ circles, all of length $1/q$. Furthermore
if we denote by $\pi:S^1 \to S^1 \mod C(x)$ the projection (which is well-defined away 
from the points of $C(x)$) the map $z \to q \pi^{-1}z$ extends from its domain of
definition over the missing points to a homeomorphism from each component of 
$S^1 \mod C(x)$ to $S^1$. By abuse of notation, we denote this map by
$q:S^1 \mod C(x) \to S^1$ and think of it as a $q$--$1$ map.

\begin{lemma}[Division by $q$]\label{lemma:divide_by_q}
If $\lambda:=\lbrace a,b\rbrace$ is a generic leaf unlinked with $C(x)$, and $a' \in S^1$
satisfies $qa'=a$ then there is a unique $b'$ with $qb'=b$ so that $\lambda':=\lbrace
a',b'\rbrace$ is unlinked with $C(x)$.
\end{lemma}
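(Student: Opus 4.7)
The plan is to exploit the $q$-to-$1$ structure of the map $q : S^1 \mod C(x) \to S^1$ set up immediately before the lemma. Because $S^1 \mod C(x)$ consists of $q$ components, each mapping homeomorphically to $S^1$, every point of $S^1$ has exactly one preimage on each component. Consequently the $q$ preimages of $b$ under multiplication by $q$ on $S^1$ distribute as one per component of $S^1 \mod C(x)$, and genericity of $\lambda$ ensures that none of these preimages is a pinched endpoint of a leaf of $C(x)$.

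The first key step is the geometric principle that a generic leaf $\lbrace a', b'\rbrace$ is unlinked with $C(x)$ if and only if $\pi(a')$ and $\pi(b')$ lie on the same component of $S^1 \mod C(x)$. Thinking of $C(x)$ as a set of disjoint chords in the disk $D^2$, the chord $\lbrace a', b'\rbrace$ crosses no leaf of $C(x)$ precisely when $a'$ and $b'$ lie in the closure of a common gap of $C(x)$; passing to the cactus and then splitting puts the gaps in bijection with the components of $S^1 \mod C(x)$.

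Granting this principle, existence and uniqueness are forced. Let $K$ be the component of $S^1 \mod C(x)$ containing $\pi(a')$; since $q|_K : K \to S^1$ is a homeomorphism, there is a unique $y \in K$ with $q(y) = b$, and its single $\pi$-preimage in $S^1$ (single-valued because $y$ avoids the pinched points by genericity) is the required $b'$. Any other preimage of $b$ under multiplication by $q$ sits on a component of $S^1 \mod C(x)$ different from $K$, so pairing it with $a'$ forces a crossing with some leaf of $C(x)$ and destroys unlinkedness.

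The main obstacle is making the ``unlinked $\iff$ same component'' characterization rigorous within the cactus/pinching formalism of Section~\ref{section:lamination}; this is a standard fact about planar laminations, but it needs a short justification to translate between the chord/disk picture and the quotient description used in the paper. Once that translation is in hand, the rest of the argument is purely bookkeeping about which component of $S^1 \mod C(x)$ a given preimage of $b$ lies on.
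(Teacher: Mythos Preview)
Your argument is correct and is precisely the paper's approach: locate the component $S_i$ of $S^1 \mod C(x)$ containing $\pi(a')$, pull $\lambda$ back under the homeomorphism $q|_{S_i}$, and take the $\pi$-preimage to obtain $\lambda'$. The paper leaves the ``unlinked $\iff$ same component'' characterization and the uniqueness half implicit, so your write-up simply spells these out more carefully.
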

\begin{proof}
`Generic' is just to rule out boundary cases where e.g.\/ $a'$ or $b'$ 
is equal to a point in $C(x)$. In particular, if $a'$ maps to a component
$S_i$ of $S^1 \mod C(x)$ then we can pull back $\lambda$ under the map
$q:S_i \to S^1$ and then take its preimage in $S^1$ to obtain $\lambda'$.
\end{proof}

Given $x$, we consider the sequence of points $x_i:=q^ix$ for $0 \le i \le n$.
By definition, $x_n=\theta_j$. Define $y_n=\theta_j+1/q$ and 
$\lambda_n:=\lbrace x_n,y_n\rbrace$ so that $\lambda_n = C_j$, and then
inductively let $\lambda_i$ be obtained from $\lambda_{i+1}$ as in 
Lemma~\ref{lemma:divide_by_q} so that $\lambda_i:=\lbrace x_i,y_i\rbrace$ where
$qy_i=y_{i+1}$ and $\lambda_i$ is unlinked from $C(x)$. Finally we obtain the
leaf $\lambda_0$ which, because it depends on $x$, we should really denote
$\lambda_0(x)$.

\begin{definition}\label{definition:leaf}
With notation as above, the depth $n$ leaves of $\Lambda_T(C)$ are the leaves
$q\lambda_0(x)$ of $S^1$ as $x$ ranges over the points in $B$ with $q^nx =\theta_j$
and $j$ ranges over $1,\cdots,q-2$.
\end{definition}
Notice that in this definition every leaf is enumerated exactly {\em twice}; if $x$ and
$z$ in $B$ have antipodal image in $B'$ then the images of $\lambda_0(x)$ and
$\lambda_0(z)$ are antipodal in $B'$ so that $q\lambda_0(x)=q\lambda_0(z)$. So we
only need to find a subset $A \subset B$ projecting to half of $B'$ and add
leaves $q\lambda_0(x)$ for $x \in A$ with $q^nx =\theta_j$. Thus the number of
leaves of depth $n$ is equal to $q^{n-1}(q-2)$. In particular $\Lambda_T(C)$ is empty 
if $q=2$.

\begin{proposition}[Lamination]
The leaves of $\Lambda_T(C)$ are pairwise unlinked; thus $\Lambda_T(C)$ really is a 
lamination. Furthermore, if $\Lambda_{T,n}(C)$ denotes the leaves of $\Lambda_T(C)$ 
of depth at most $n$, the set of lengths of components of $S^1 \mod \Lambda_{T,n}(C)$ 
(counted with multiplicity) is independent of $C$ and depends only on $q$.
\end{proposition}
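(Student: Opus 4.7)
My plan is to prove both assertions simultaneously by induction on the depth $n$, tracking not only the leaves but also the combinatorial type of the complement $S^1 \setminus \Lambda_{T,n}(C)$ and its pinching $S^1 \mod \Lambda_{T,n}(C)$.

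For the unlinking part, the base case $n=1$ is essentially the definition: the leaf $q\lambda_0(x)$ is unlinked with $C$ because by construction $\lambda_0(x)$ is unlinked with $C(x)\supset C$, and multiplication by $q$ on a single component of $S^1\mod C(x)$ is a homeomorphism to $S^1$. For the inductive step, suppose $\Lambda_{T,n-1}(C)$ is a lamination, and consider two depth-$n$ leaves $q\lambda_0(x)$ and $q\lambda_0(x')$. The plan is to track the ``address'' of each pullback $\lambda_i(x)$ as a component of $S^1 \mod (C \cup \{\text{leaves linking our chain}\})$, using Lemma~\ref{lemma:divide_by_q} as the single-step pullback. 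The key combinatorial claim to verify is that if $\lambda_i(x)$ and $\lambda_i(x')$ are unlinked, their unique preimages $\lambda_{i-1}(x)$ and $\lambda_{i-1}(x')$ are unlinked too, which should follow because the auxiliary leaves $\{x,z(x)\}$ and $\{x',z(x')\}$ appearing in $C(x)$ and $C(x')$ are themselves preimages (under $q^{n-1}$) of points already controlled by the depth-$(n-1)$ picture. I also need to handle unlinking between depth-$n$ leaves and leaves of strictly smaller depth; this reduces to the observation that a depth-$n$ leaf lives inside a specific component of $S^1\mod\Lambda_{T,n-1}(C)$ which is precisely read off from the iterated addresses of $x$.

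For the $C$-independence of the length multiset, my plan is to give an ``abstract'' description of $S^1 \mod \Lambda_{T,n}(C)$ that depends only on the combinatorial type, together with a continuity/connectedness argument. Concretely, I would show:
\begin{enumerate}
\item As $C$ varies through the open, connected parameter space of \emph{generic} critical data, the combinatorial adjacency pattern of leaves of $\Lambda_{T,n}(C)$ is locally constant (no leaf endpoint crosses another, since doing so would violate genericity).
\item The length of each component of $S^1 \mod \Lambda_{T,n}(C)$ can be computed from this combinatorial pattern by summing arc lengths $q^{-n}\cdot(\text{integer})$ between successive preimages of a reference point, and the integers involved are determined by the combinatorics alone.
\item Hence the multiset of lengths is constant on the connected parameter space, depending only on $q$ and $n$.
\end{enumerate}
An appealing alternative, which I would try in parallel, is to give an explicit inductive rule: each component of $S^1 \mod \Lambda_{T,n-1}(C)$ is subdivided by a predictable number of depth-$n$ leaves into subcomponents whose lengths are determined by a rule depending only on the length of the parent and on $q$. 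This would simultaneously prove independence and set up the recursion used later for $N_q(n,m)$.

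The main obstacle I anticipate is the mutual unlinking step in the induction: the chains $\lambda_i(x)$ and $\lambda_i(x')$ are built using \emph{different} auxiliary laminations $C(x)$ and $C(x')$, so Lemma~\ref{lemma:divide_by_q} does not directly compare them. Overcoming this requires showing that each $\lambda_i(x)$ actually lies in a single component of $S^1 \mod (C(x)\cup C(x'))$ in a way compatible with the common preimage structure under $q^{n-i}$, so that the two pullback procedures can be carried out inside a common finer lamination. Once this compatibility is set up cleanly, both parts of the proposition should drop out of the same induction.
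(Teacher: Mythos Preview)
The paper does not actually prove this proposition here: it simply cites \cite{Calegari_sausage}, \S 7. In that reference the argument is not a direct combinatorial induction of the kind you outline, but goes through the holomorphic dynamics machinery (the ``sausage polynomial'' picture) that also underlies Proposition~\ref{prop:bijection}. So your proposal is, by design, a genuinely different route from what the paper invokes.

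As written, however, your proposal is a plan rather than a proof, and the obstacle you flag at the end is exactly the missing idea. You correctly observe that $\lambda_i(x)$ and $\lambda_i(x')$ are constructed relative to \emph{different} auxiliary laminations $C(x)$ and $C(x')$, so Lemma~\ref{lemma:divide_by_q} gives you no direct comparison between them. Your suggested fix --- carry out both pullback procedures inside the common refinement $C(x)\cup C(x')$ --- is not obviously available: the leaf $\lambda_i(x)$ is characterized as the unique preimage unlinked with $C(x)$, and there is no a~priori reason it should also be unlinked with the extra leaf $\{x',z(x')\}$. Until you prove that (or some equivalent compatibility statement), the induction does not close. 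This is the heart of the matter, and you have deferred it rather than handled it.

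Your continuity argument for $C$-independence also has a gap. The space of generic $C$ is connected, but that alone does not give local constancy of the combinatorial pattern: as $C$ moves within the generic locus, endpoints of depth-$n$ leaves move, and two such endpoints can cross each other at parameter values that are themselves generic in your sense (irrationality of the $\theta_j$ does not prevent two distinct $q^{-n}$-rational translates of them from coinciding or swapping order). So ``no leaf endpoint crosses another, since doing so would violate genericity'' is not justified. Your alternative plan --- an explicit inductive subdivision rule for how each component of $S^1\mod\Lambda_{T,n-1}(C)$ is cut by depth-$n$ leaves --- is closer in spirit to what the tree-polynomial bijection ultimately encodes, and would be the more promising line if you want a self-contained combinatorial proof; but you would still need the unlinking statement first, which brings you back to the unresolved obstacle above.
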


For a proof, see \cite{Calegari_sausage}, \S~7. Tautological Laminations for
$q=3,4,5,6$ (for a rather symmetric choice of $C$) are displayed in Figure~\ref{lams}.

\begin{figure}[htpb]
\centering
\includegraphics[scale=0.33]{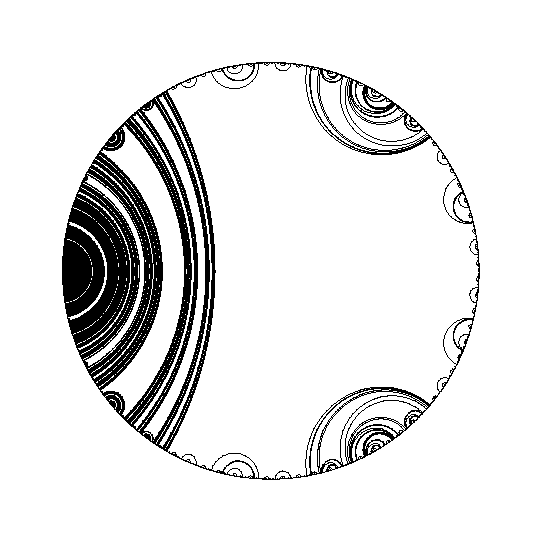}
\includegraphics[scale=0.33]{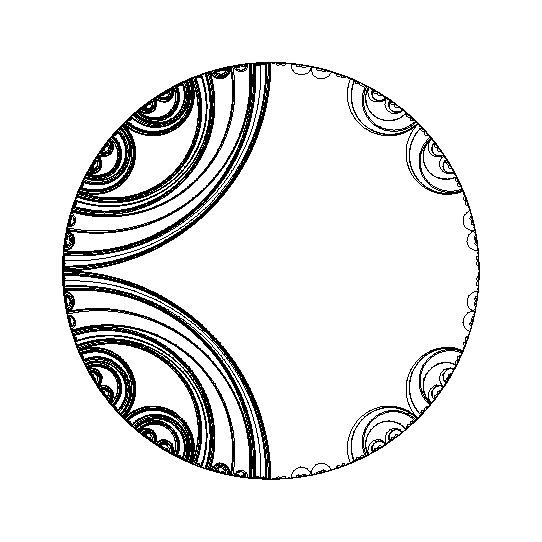}
\includegraphics[scale=0.33]{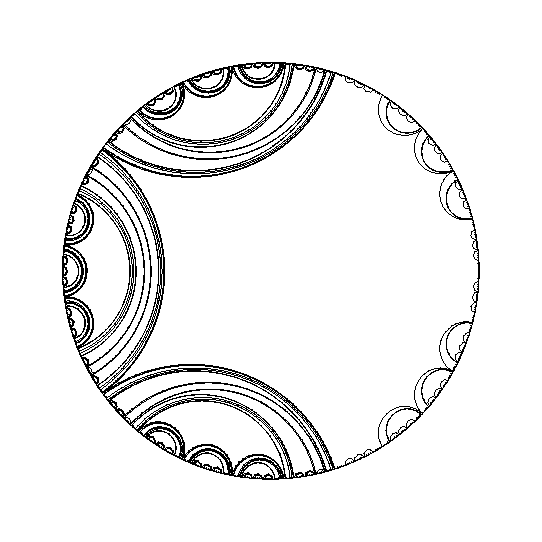}
\includegraphics[scale=0.33]{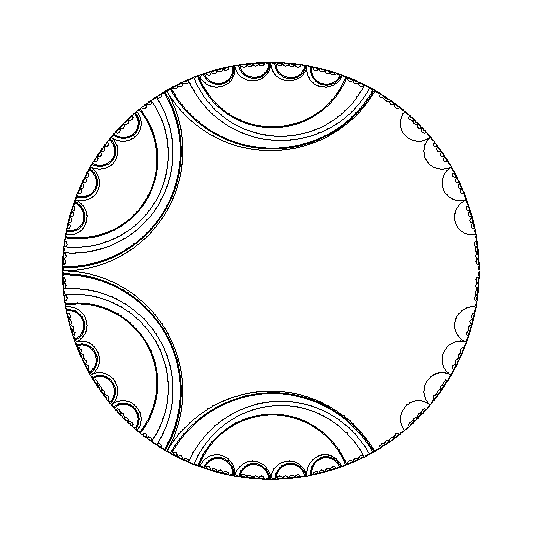}
\caption{Tautological Laminations for $q=3,4,5,6$.}
\label{lams}
\end{figure}

Since the set of lengths of $S^1 \mod \Lambda_{T,n}(C)$
(with multiplicity) is independent of $C$ we can fix a normalization
$\theta_j = (j-1)/q$ and suppress $C$ in our notation in the sequel. This set of
values is not generic; so we interpret the values of $\theta_j$ as limits as we
approach $(j-1)/q$ from below. So we should interpret $C_j$ as a `leaf' whose endpoints
span the interval $[(j-1)/q,j/q)$, for the purposes of determinining when leaves 
and their preimages are linked.

Then every depth $n$ leaf is of the form
$q\lambda$ where $\lambda = \lbrace x,y\rbrace$ and $q^nx,q^ny = (j-1)/q, j/q$
respectively. It follows that every depth $n$ leaf of $\Lambda_T$ consists of
a pair of points which are integer multiples of $q^{-n}$, and therefore every
component of $S^1 \mod \Lambda_{T,n}$ has
length which is an integer multiple of $q^{-n}$. What is not obvious, but is
nevertheless true, is that these integer multiples are all {\em powers of 2} (we shall
deduce this in the sequel). Write the length of a component as $\ell\cdot q^{-n}$
where $\ell$ is a power of 2, and define $N_q(n,m)$ to be the number of components of
$S^1 \mod \Lambda_{T,n}$ with $\ell=2^m$.

Let's spell out Definition~\ref{definition:leaf} in this normalization.
We can take $B$ and $A$ to be the half-open intervals
$$B=[(q-2)/q,1) \text{ and } A=[(q-2)/q,(q-1)/q)$$ 
The base $q$ expansion of $x \in A$ with $q^nx = (j-1)/q$ is
a word of length $n+1$ in the alphabet $\lbrace 0,1,\cdots,(q-1)\rbrace$ 
starting with the digit $(q-2)$ and ending with the digit $(j-1)$.
If we denote the digits of $x$ as $x_0 \cdots x_n$ then
$$x:= \cdot (q-2) x_1 x_2 \cdots x_{n-1} (j-1) \text{ and } z: = \cdot (q-1) x_1 x_2 \cdots x_{n-1} (j-1)$$
Likewise, we denote the digits of $y$ as $y_0 \cdots y_n$.
Then 
\begin{enumerate}
\item{$y_n = j$; and recursively,}
\item{if $x_i \ne q-2$ or $q-1$ then $y_i = x_i$; and}
\item{if $x_i = q-2$ or $q-1$ then $y_i$ is the unique one of $q-1$ or $q-2$
so that $\cdot x_i \cdots x_n$ and $\cdot y_i \cdots y_n$ do not link $x$ and $z$.}
\end{enumerate}

Although we are not able to give a simple formula for $N_q(n,m)$, it turns out there is
a relatively simple formula for $N_q(n,0)$ --- i.e.\/ the number of components of
$S^1 \mod \Lambda_{T,n}$ of length $q^{-n}$. These are the {\em short components}.

\subsection{Short Components}

One of the nice things about our normalization $C$ is that there is a simple
relationship between short components of $S^1 \mod \Lambda_{T,n}$ and certain
depth $n$ leaves of $\Lambda_{T,n}$, a relationship which is substantially more complicated
for longer components. Say a {\em short leaf} is a depth $n$ leaf
of $\Lambda_{T,n}$ whose points differ by exactly $q^{-n}$ 
(this is the least it can be). Then:

\begin{lemma}[Short Leaf]\label{lemma:short_leaf}
There is a bijection between short components and short leaves of any fixed depth.
\end{lemma}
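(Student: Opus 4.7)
The plan is to set up an explicit bijection $\lambda \mapsto G_\lambda$ sending a short leaf $\lambda = \{a, a+q^{-n}\}$ of depth $n$ to the gap $G_\lambda$ of the lamination bounded by $\lambda$ and the short arc $[a, a+q^{-n}] \subset S^1$; pinching this gap yields a circle of length $q^{-n}$, which is the short component associated to $\lambda$. The fundamental input, already noted in the paper, is that in the normalization $\theta_j = (j-1)/q$ every endpoint of a leaf of $\Lambda_{T,n}$ is an integer multiple of $q^{-n}$, so any nontrivial arc of $S^1$ cut out by leaf endpoints has length a positive integer multiple of $q^{-n}$.

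To verify that $G_\lambda$ really is a gap, note that the open interval $(a,a+q^{-n})$ contains no multiples of $q^{-n}$, so no leaf of $\Lambda_{T,n}$ can have an endpoint strictly between $a$ and $a+q^{-n}$; any leaf sharing an endpoint with $\lambda$ must have its other endpoint outside $[a,a+q^{-n}]$ by the same count. Thus $\lambda$ is the unique leaf of $\Lambda_{T,n}$ on $\partial G_\lambda$, and pinching $G_\lambda$ produces a short component of length $q^{-n}$. The assignment $\lambda \mapsto G_\lambda$ is clearly injective since distinct short leaves subtend disjoint short arcs.

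For surjectivity, I would argue in reverse: every short component of $S^1 \mod \Lambda_{T,n}$ arises as the pinched quotient of some gap $G$ whose boundary-arc lengths sum to $q^{-n}$. Since each nontrivial boundary-arc length is a positive integer multiple of $q^{-n}$, the gap $G$ has exactly one nontrivial boundary arc (of length $q^{-n}$), hence is bounded by a single leaf $\lambda$ whose endpoints differ by $q^{-n}$. This $\lambda$ must have depth exactly $n$: any depth-$n'$ leaf with $n' < n$ would have endpoints at multiples of $q^{-n'}$ and so would differ by at least $q^{-n'} > q^{-n}$, while $\lambda \in \Lambda_{T,n}$ forces depth $\le n$. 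The inverse assignment sends this short component back to $\lambda$, completing the bijection.

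The principal subtlety lies in the surjectivity step, where one must rule out degenerate gaps whose boundary contains several leaves sharing endpoints, contributing several zero-length boundary arcs alongside the single length-$q^{-n}$ arc. In the truly generic regime (irrational, irrationally related $\theta_j$) such coincidences do not occur and all distinct leaf endpoints are positively separated; in the normalized limit $\theta_j = (j-1)/q$ one needs to invoke the explicit recipe of Definition~\ref{definition:leaf} to show the tautological construction does not produce such degenerate configurations bounding a short arc. This is the main obstacle I expect to wrestle with; everything else is essentially counting multiples of $q^{-n}$.
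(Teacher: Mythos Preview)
Your framework is essentially the paper's: set up the obvious correspondence and reduce everything to ruling out the degenerate configuration in which a short component is bounded not by a single short leaf but by a chain of several leaves sharing endpoints (your ``zero-length boundary arcs''). You have correctly isolated this as the only nontrivial step, but you have not carried it out; the paper does, and the argument is short enough that you should simply supply it rather than leave it as an ``obstacle''.

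Here is the missing idea. Scale by $q^n$ so that every depth-$k$ leaf with $k<n$ has both endpoints $\equiv 0 \pmod q$, while a depth-$n$ leaf of type $j$ joins a point $\equiv j-1$ to a point $\equiv j \pmod q$ (recall $1\le j\le q-2$). Distinct depth-$n$ leaves of the \emph{same} type $j$ cannot share an endpoint (this follows directly from the construction in Definition~\ref{definition:leaf}). Hence in any chain of leaves with consecutive shared endpoints, the depth-$n$ leaves occur with strictly monotone type; moreover a depth-$<n$ leaf can only abut a depth-$n$ leaf of type $1$ (at its $\equiv 0$ endpoint), and only in the anticlockwise direction by the limit-from-below convention for $\theta_j$. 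Consequently if the chain contains two or more leaves, its two free endpoints differ by at least $2 \pmod q$. But the single nontrivial boundary arc has length $1$ after scaling, so its endpoints differ by exactly $1 \pmod q$. This contradiction forces the chain to consist of a single leaf, which is then your short leaf of depth $n$.

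With this residue/type argument inserted, your surjectivity step goes through and the proof is complete; without it the proposal is a correct outline with its heart missing.
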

\begin{proof}
Let $S$ be a short component at depth $n$, and consider the preimage $X$ in $S^1$. Then $X$ is
a union of finitely many disjoint arcs and isolated points 
bounded by leaves of depth $\le n$. The total length of $X$
is $q^{-n}$ by the definition of short component. But leaves of depth $k$ consist
of points which are integer multiples of $q^{-k}$ so the only possibility is that
$X$ consists of a single arc $Y$ of $S^1$ together with finitely many (possibly zero)
isolated points joined to the endpoints of $Y$ by a chain of leaves 
$\gamma_0,\cdots,\gamma_n$, each sharing one endpoint with the next.

We claim that in fact there are no isolated points, so that $X=Y$ is a single arc of
$S^1$ cut off by a single (necessarily) short leaf. 
To see this, let's enlarge the circle by a 
factor of $q^n$ so that depth $k$ leaves with $k<n$ 
consist of points which are divisible by $q$, and each depth $n$ leaf of type $j$
joins a point congruent to $(j-1)$ mod $q$ to a point congruent to $j$ mod $q$.
By the nature of their construction distinct depth $n$ leaves of type $j$ 
cannot share an endpoint, so a depth $n$ leaf of type $j$ must be followed by 
a depth $n$ leaf of type $j+1$, 
and only a type $1$ leaf of depth $n$ can follow a depth $<n$ leaf and only in the
positive (i.e.\/ anticlockwise) direction around $S^1$ (remember our understanding
of $\theta_j$ as the limit of a sequence approaching $(j-1)/q$ from below). 
It follows that if there is 
some intermediate point, the endpoints of $Y$ differ by at least $2$ mod $q$ so that
$S$ is not short after all. This proves the claim. \end{proof}

Note that this Lemma is {\em false} for generic $C$.

Let $\lambda'$ be a short leaf of $\Lambda_T$ of depth $n$ of the form $q\lambda_0(x)$
where $x\in A$ and $q^nx=\theta_j$. For this normalization, $z=x+1/q$ and 
$y=x+q^{-1-n}$ where $\lambda_0(x)=\lbrace x,y\rbrace$. 
The defining property of being a depth $n$ leaf means that 
$\lambda_k(x)=q^k\lambda_0(x)$ does not link $C(x)= C \cup \lbrace x,z\rbrace$ for any
$0\le k\le n$. Actually, for {\em any} integer $m \mod q^{n+1}$, setting $x= mq^{-1-n}$
and $y=(m+1)q^{-1-n}$, the leaf $\lambda_k(x)$ does not link any 
$C_i$ for $0\le i \le (q-2)$. So the short leaves are just the $x$ for which
$\lambda_k(x)$ does not link $\lbrace x,z\rbrace$ for $0\le k \le n$.

Remember that the base $q$ expansion of $x$ is a word of length $n+1$ in the
alphabet $\lbrace 0,1,\cdots,(q-1)\rbrace$ starting with the digit $(q-2)$ and
ending with the digit $(j-1)$. The base $q$ expansion of $y$ is the same as that of $x$
with the last digit replaced by $j$. Similarly, the base $q$ expansion of $z$ is the
same as that of $x$ with the first digit replaced by $(q-1)$. We deduce:

\begin{lemma}[Short is 1-unbordered]\label{lemma:short_unbordered}
A word in the alphabet $\lbrace 0,1,\cdots,(q-1)\rbrace$ of length $(n+1)$ 
starting with $(q-2)$ and ending with $(j-1)$ corresponds to a short leaf 
if and only if it is 1-unbordered.
\end{lemma}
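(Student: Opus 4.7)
The plan is to translate the geometric non-linking condition that defines short leaves into a combinatorial condition on the length-$(n+1)$ base-$q$ word $W = x_0 x_1 \cdots x_n$ associated to $x$. From the discussion immediately preceding the statement, $x$ yields a short leaf iff the leaves $\lambda_k := \{q^k x, q^k y\}$ fail to link $\{x, z\}$ for every $0 \le k \le n$. Since $q^k y - q^k x = q^{k-n-1} < 1/q = z - x$ whenever $k < n$, at most one of $x, z$ can lie in the short open arc from $q^k x$ to $q^k y$, and so $\lambda_k$ links $\{x, z\}$ precisely when exactly one of $x, z$ does.

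I would dispatch two extreme values of $k$ at the outset. When $k = 0$, the leaves $\lambda_0 = \{x, y\}$ and $\{x, z\}$ share the endpoint $x$, so they never link. When $k = n$, we have $\{q^n x, q^n y\} = \{(j-1)/q, j/q\}$; since $j \le q-2$ both endpoints have leading base-$q$ digit at most $q-2$, placing them in the complement of the short arc $(x, z)$ (whose interior points have leading digit $q-2$ or $q-1$ with the appropriate tail). So the substantive range is $1 \le k \le n-1$.

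The heart of the argument is then the following base-$q$ observation. Since $q^k x = \cdot\, x_k x_{k+1} \cdots x_n$, the open arc $(q^k x, q^k y)$ consists exactly of the points whose first $n-k+1$ base-$q$ digits spell the word $x_k x_{k+1} \cdots x_n$. Comparing with $x = \cdot\, x_0 x_1 \cdots x_n$, the point $x$ lies in this arc iff $x_0 x_1 \cdots x_{n-k} = x_k x_{k+1} \cdots x_n$, i.e., iff $W$ has a border of length $n-k+1$. Since $z$ differs from $x$ only in having its leading digit incremented by $1$ in $\Z/q\Z$, the point $z$ lies in the same arc iff $W$ has a $1$-border of length $n-k+1$. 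Letting $k$ range over $1 \le k \le n-1$, the length $n-k+1$ sweeps $\{2, 3, \ldots, n\}$; the remaining length-$1$ case is excluded for free since $x_0 = q-2$ and $x_n = j-1 \le q-3$ force both $x_0 \ne x_n$ and $x_0 + 1 \not\equiv x_n \pmod q$. Combining, $W$ corresponds to a short leaf iff it has no border and no $1$-border of any length between $1$ and $n$, i.e., iff $W$ is $1$-unbordered.

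The main care I expect to need is the bookkeeping at borderline configurations where $x$ or $z$ coincides with an endpoint $q^k x$ or $q^k y$ of the arc rather than lying strictly in its interior; these are exactly the degenerate alignments where a terminating base-$q$ expansion falls on an arc boundary. All relevant points are integer multiples of $q^{-n-1}$, so such coincidences are finite in number and can be resolved using the limit-from-below convention for $\theta_j$ adopted just before Definition~\ref{definition:leaf}. In every such coincidence the two leaves involved share an endpoint and therefore do not link, matching the combinatorial statement and closing the loop.
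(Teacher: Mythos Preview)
Your argument is correct and follows the same route as the paper: identify the base-$q$ expansion of the shifted pair $\{q^kx,q^ky\}$ with the $k$-fold left shift of the word, and observe that linking with $\{x,z\}$ is exactly the condition that the corresponding suffix coincides with a prefix of $x$ or of $z$. The paper compresses this into two sentences; you have spelled out the arc description, the length bookkeeping $n-k+1$, and the disposal of the extremal indices $k=0$, $k=n$ and of length-$1$ borders, together with the half-open boundary convention, none of which changes the underlying idea.
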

\begin{proof}
The leaf $\lambda_k(x)=\lbrace q^kx,q^ky\rbrace$, and the base $q$ expansions of
$q^kx$ and $q^ky$ are obtained from the base $q$ expansions of $x$ and $y$ by
the $k$-fold left shift. This leaf links $\lbrace x,z\rbrace$ if and only if the 
length $k$-suffix of $x$ is either equal to a prefix of $x$, or to a prefix of $z$. 
But this is the definition of a 1-unbordered word. 
\end{proof}

Since $(j-1)$ is allowed to vary from $0$ to $(q-3)$, and since a word that
starts with $(q-2)$ and ends with $(q-2)$ or $(q-1)$ is already 1-bordered, 
it follows that $N_q(n,0)$ is equal to the number of 1-unbordered words of length
$(n+1)$ starting with $(q-2)$, which is just $q^{-1}$ times the number of 1-unbordered
words of length $(n+1)$. In other words: 

\begin{theorem}[Recursive Formula]\label{thm:recursive_formula}
$N_q(n,0)$ satisfies the recursion 
$N_q(0,0) = 1$, $N_q(1,0)=(q-2)$ and
$$N_q(2n,0) = qN_q(2n-1,0) \text{ and } N_q(2n+1,0) = qN_q(2n,0)-2N_q(n,0)$$
and has the generating function $(\beta(t)-1)/qt$ where
a closed form for $\beta(t)$ is given in Proposition~\ref{proposition:closed_form}.
\end{theorem}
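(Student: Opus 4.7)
The plan is to combine the two preceding lemmas with Proposition~\ref{proposition:recursion}. The Short Leaf Lemma identifies short components at depth $n$ with short depth-$n$ leaves of $\Lambda_T$, and the Short-is-1-unbordered Lemma identifies those in turn with 1-unbordered length-$(n+1)$ words in the alphabet $\Z/q\Z$ beginning with the digit $(q-2)$ and ending with some digit $(j-1)$ for $j=1,\ldots,q-2$. The goal is to show this count equals $b_{n+1}/q$, after which the recursion and generating function fall out of Proposition~\ref{proposition:recursion} mechanically.

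The first step is to observe that the restriction on the last digit in Definition~\ref{definition:leaf} is automatic for 1-unbordered words starting with $(q-2)$: if the last digit is $(q-2)$ then the length-$1$ suffix equals the length-$1$ prefix $P$, and if the last digit is $(q-1)$ then the length-$1$ suffix equals $P'$; in either case the word is already 1-bordered. So $N_q(n,0)$ is exactly the number of 1-unbordered words of length $n+1$ whose first letter is $(q-2)$. The second step is a symmetry argument: 1-unborderedness is invariant under the $\Z/q\Z$-action on words that adds a constant $c$ to every letter, since the relations $S=P$ and $S=P'$ are preserved (the latter because adding $1$ to the first letter commutes with adding $c$ to every letter). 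Because the "first letter" map is equivariant for this action, the total count $b_{n+1}$ is evenly split among the $q$ possible starting digits, whence $N_q(n,0)=b_{n+1}/q$.

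With this identification the theorem is immediate. Proposition~\ref{proposition:recursion}'s identity $b_{2m+1}=qb_{2m}$ at $m=n$ becomes $N_q(2n,0)=qN_q(2n-1,0)$, and $b_{2m}=qb_{2m-1}-2b_m$ at $m=n+1$ becomes $N_q(2n+1,0)=qN_q(2n,0)-2N_q(n,0)$. A direct count gives $b_1=q$ (every length-$1$ word is 1-unbordered) and $b_2=q(q-2)$ (the length-$2$ 1-bordered words are exactly $aa$ and $a(a+1)$), confirming $N_q(0,0)=1$ and $N_q(1,0)=q-2$. Shifting the index by one turns $\beta(t)=\sum_{n\ge 0}b_n t^n$ into the generating function $(\beta(t)-1)/qt=\sum_{n\ge 0}(b_{n+1}/q)t^n$ for $N_q(n,0)$, and the closed form is then read off from Proposition~\ref{proposition:closed_form}.

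Essentially no real obstacle remains, since the combinatorial heavy lifting is already done in the Short Leaf Lemma and the Short-is-1-unbordered Lemma. The only point that could trip one up is the symmetry check itself, and in particular that it applies to the modified notion of borderedness involving $P'$; this is the place I would write out carefully, but it is a one-line verification once the definitions are unwound.
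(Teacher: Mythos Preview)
Your proposal is correct and follows exactly the paper's line: use the Short Leaf Lemma and the Short-is-1-unbordered Lemma to identify $N_q(n,0)$ with the 1-unbordered words of length $n+1$ starting with $(q-2)$, observe the last-digit constraint is automatic, and then divide by $q$. The paper asserts the last step (``which is just $q^{-1}$ times the number of 1-unbordered words of length $(n+1)$'') without justification, so your explicit $\Z/q\Z$-translation symmetry argument is a welcome elaboration rather than a departure.
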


\section{Tree Polynomials}\label{section:tree_polynomials}

We now discuss a rather different class of objects that turn out to be 
naturally in bijection with the components of $S^1 \mod \Lambda_{T,n}$. 
These objects are called {\em tree polynomials}.

We give our definition in terms of rooted trees (with some auxiliary planar structure)
and adopt the standard terminology of parents, children, siblings etc. Thus for every
(non-root) vertex there is a unique embedded path from that vertex to the root, and the {\em parent}
of $v$ is the unique vertex $w$ on that path connected to $v$ by an edge, and conversely
$v$ is the {\em child} of $w$; vertices are {\em siblings} if they share a common parent,
and so on.

\begin{definition}
A {\em tree polynomial} is a finite rooted tree $T$ together with the following
data:
\begin{enumerate}
\item{{\bf depth:} all leaves have a common depth $n$; we call this the {\em depth of $T$};}
\item{{\bf critical:} all vertices are {\em critical} or {\em ordinary};
\begin{enumerate}
\item{the root is critical;}
\item{every non-leaf critical vertex has exactly one critical child;}
\item{every ordinary vertex has no critical children;}
\end{enumerate}
}
\item{{\bf order:} the children of every vertex are {\em ordered}, and the
critical child of the root is first among its siblings;}
\item{{\bf self-map:} there is a simplicial self-map $f:T \to T$ such that
\begin{enumerate}
\item{$f(\text{root})=\text{root}$;}
\item{$f(v)=\text{root}$ for all children $v$ of the root; and}
\item{for all $v$ with non-root parent $w$, the image $f(v)$ is a
child of $f(w)$;}
\item{if $v$ is ordinary and not a leaf, then $f$ maps the children of $v$
bijectively and in an order-preserving way to the children of $f(v)$;}
\item{if $v$ is critical and not a leaf or the root, then $f$ maps the children of $v$
in an order non-decreasing way to the children of $f(v$); this map is onto and 
2--1 except for the critical child of $v$ which is the unique child 
mapping to its image;}
\end{enumerate}
}
\item{{\bf length:} there is a {\em length} function $\ell$ from the vertices to $\N$;
\begin{enumerate}
\item{$\ell(\text{root})=1$;}
\item{if $v$ is ordinary, $\ell(v)=\ell(f(v))$;}
\item{if $v$ is critical, $\ell(v)=2\ell(f(v))$.}
\end{enumerate}
}
\end{enumerate}
\end{definition}

Another way of talking about the order structure on the children of each vertex is
to say that $T$ is a {\em planar} tree, and the map $f$ is compatible with the
planar structure.

\subsection{Basic Properties}

\begin{definition}[Degree]
Let $T$ be a tree polynomial. The root has one critical child with $\ell=2$
and some non-negative number of ordinary children with $\ell=1$. All children
map to the root under $f$. Thus tree polynomials of depth $1$ are classified by
the number of children. The {\em degree} of a tree polynomial, denoted $q(T)$, 
is equal to the number of children of the root, plus one.
\end{definition}

\begin{example}[Degree 2]
There is a unique tree polynomial of degree $2$ of any positive depth, since
every vertex is critical and all but the leaf have a unique child.
\end{example}

\begin{definition}[Postcritical length]
Let $T$ be a tree polynomial and let $c$ be the unique critical
leaf. The {\em postcritical length} of $T$, denoted $\ell(T)$, 
is equal to $\ell(f(c))=\ell(c)/2$.
\end{definition}

By induction, $\ell(T)$ is always a power of $2$.

The next Proposition explains why we have introduced tree polynomials:
\begin{proposition}[Bijection]\label{prop:bijection}
There is a natural bijection between the set of degree $q$ tree polynomials $T$
of depth $(n+1)$ with $\ell(T)=\ell$ and the set of components of $S^1 \mod \Lambda_{n,T}$
of length $\ell\cdot q^{-n}$ where $\Lambda_{n,T}$ are the leaves of depth $\le n$ in
the Tautological Lamination from \S~\ref{section:lamination}.
\end{proposition}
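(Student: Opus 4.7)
The plan is induction on $n$. The base case $n=0$ is immediate: there is a unique depth-$1$ tree polynomial (the root with its $q-1$ children, one critical of length $2$ and $q-2$ ordinary of length $1$) and it has $\ell(T)=1$, matched to the unique component $S^1 \mod \Lambda_{T,0} = S^1$ of length $1$. For the inductive step I would construct a forward map $\Phi\colon S\mapsto T_S$ from components of $S^1\mod\Lambda_{T,n}$ to tree polynomials of depth $n+1$, an inverse map $\Psi\colon T\mapsto S_T$, and verify they are mutually inverse.

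The forward map $\Phi$ should record the dynamical itinerary of $S$ together with that of the critical data under the map $q\colon S^1\to S^1$. The critical path of $T_S$ (from root to critical leaf) tracks the nested sequence of ``big'' components at each depth through which the orbit of the critical data passes, ordinary vertices correspond to components that translate rigidly (without folding) under one application of $q$, and the two-to-one structure of $f$ at a non-root critical vertex records the fact that near a preimage of a critical point two arcs are identified under $q$. The length $\ell(v)$ measures the length of the corresponding arc in $q^{k}$-rescaled coordinates, so the rule $\ell(v)=2\ell(f(v))$ at critical vertices reflects that the arc enveloping the critical orbit is twice as long as its image. For the inverse map $\Psi$ one builds the component starting from a tree polynomial $T$ by iteratively cutting and pasting arcs in $S^1$, guided by the ordered tree structure, the lengths, and the self-map.

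The main obstacle I anticipate is checking that the self-map $f$ faithfully reflects the dynamics on components: at each new level of refinement one must verify that the children of a critical vertex realize the prescribed two-to-one folding and non-decreasing ordering, while those of an ordinary vertex permute bijectively in an order-preserving way, and that the critical child of the root sits in the correct cyclic position. I would try to make this precise by generalizing the symbolic/base-$q$ coding used in the proof of the Short Leaf Lemma from short leaves to all components, so that the tree polynomial axioms translate directly into constraints on these codes. Once that compatibility is established, the length bookkeeping and the matching of cardinalities should follow by induction, using the fact recalled from Section~\ref{section:lamination} that the multiset of component lengths at depth $n$ depends only on $q$.
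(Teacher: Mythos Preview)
Your approach is fundamentally different from the paper's. The paper does not prove this proposition directly at all: it simply cites Theorems~9.20 and~9.21 of \cite{Calegari_sausage}, where the bijection is established via \emph{sausage polynomials} --- infinite nodal genus~$0$ Riemann surfaces with a degree~$q$ holomorphic self-map --- and the argument there is topological/analytic rather than combinatorial. Tree polynomials are introduced in the present paper only as the combinatorial shadows of those objects.

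Your proposal, by contrast, attempts a direct combinatorial bijection built by induction on depth, with forward and inverse maps defined in terms of itineraries and base-$q$ codings. This is exactly the kind of argument the paper explicitly flags as missing: in the Introduction the author writes that the bijection ``is a corollary of one of the main theorems of \cite{Calegari_sausage}, and the proof there is topological. We know of no direct combinatorial proof of this bijection, and raise the question of whether one can be found.'' So you are, perhaps unknowingly, proposing to resolve an open question.

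As it stands, the proposal is only a strategy, not a proof. The genuinely hard step --- defining $\Phi$ and $\Psi$ precisely and verifying the axioms for $f$ (order-preserving bijection at ordinary vertices, order non-decreasing two-to-one cover at non-root critical vertices, with the correct exceptional critical child) --- is exactly where you say ``I would try to make this precise.'' The Short Leaf Lemma coding works cleanly because short components are single arcs bounded by a single leaf; general components are unions of arcs glued along chains of leaves of varying depths and types, and it is not clear that the base-$q$ itinerary of such a configuration organizes itself into the rigid planar-tree-with-self-map structure demanded by the definition. Until those maps are actually written down and the verifications carried out, this remains a plausible outline for an attack on the open problem rather than a proof of the proposition.
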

\begin{proof}
This is a corollary of Thms~9.20 and 9.21 in \cite{Calegari_sausage}. The tree
polynomials are combinatorial abstractions of the sausage polynomials defined in 
\cite{Calegari_sausage} Def.~9.4. A sausage polynomial is a certain kind of infinite nodal
genus 0 Riemann surface $\Sigma$ together with a holomorphic 
self-map of degree $q$ satisfying a
number of properties. A tree polynomial records only the underlying combinatorics
of $\Sigma$, which is enough to recover $\ell$.
\end{proof}

It follows that for each $n$ and each $m$, the number of degree $q$ tree polynomials
$T$ of depth $(n+1)$ with $\ell(T) = 2^m$ is $N_q(n,m)$.

\begin{lemma}[Extension]\label{lemma:extension}
Let $T$ be a tree polynomial of depth $n$ and let $c$ be the unique critical 
leaf. Then tree polynomials $T'$ of depth $(n+1)$  
that extend $T$ are in bijection with the children of $f(c)$.
\end{lemma}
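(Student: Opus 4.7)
My plan is to show that extending $T$ to a tree polynomial $T'$ of depth $n+1$ is totally forced at each ordinary leaf of $T$ and that the only free parameter lives at the critical leaf $c$, where it amounts to a single choice among the children of $f(c)$. The key enabling observation I would use first is that for any leaf $v$ of depth $n \ge 1$ the image $f(v)$ has depth $n-1$, hence $f(v)$ is already an interior vertex of $T$ and its children in any extension $T'$ coincide with its children in $T$. So everything can be built referencing only data already present in $T$.

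Next I would handle the ordinary leaves. At each ordinary leaf $v$, rule (4d) applied in $T'$ requires $f$ to restrict to an order-preserving bijection from the new children of $v$ onto the fixed children of $f(v)$. Together with rule (2c) forcing ordinary criticality and rule (5b) forcing the lengths, this pins down the number, order, criticality, $f$-images, and lengths of every new vertex below $v$, so no choice enters at any ordinary leaf.

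Then I would address the critical leaf $c$ (which, having depth $n\ge 1$, is neither root nor leaf in $T'$). Rule (4e) demands that $f$ map the new children of $c$ in an order non-decreasing way onto the children $w_1, \ldots, w_m$ of $f(c)$, two-to-one except at the fiber of the new critical leaf. Choosing which $w_{j_0}$ receives the new critical leaf forces the total count of children of $c$ to be $2m-1$, places the critical child at position $2j_0 - 1$, makes the remaining children of $c$ a two-to-one order-nondecreasing cover of $\{w_j : j \ne j_0\}$, and determines all lengths via rules (5b) and (5c), with the new critical leaf receiving length $2\ell(w_{j_0})$. Reversing this recipe, any extension $T'$ gives back a distinguished $w_{j_0}$ by reading off the image of its new critical leaf.

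The main obstacle will be purely bookkeeping: verifying that for every choice of $w_{j_0}$ the resulting data really does satisfy all five axiom groups (depth, critical, order, self-map, length) simultaneously when glued to the fixed data of $T$. Since each constraint at a leaf of $T$ translates directly into a forced feature of $T'$, this check is routine and should yield the claimed bijection with no deeper difficulty.
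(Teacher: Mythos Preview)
Your proof is correct and follows essentially the same approach as the paper's: both argue that the extension is forced at every ordinary leaf (by copying the children of $f(v)$ via rule (4d)) and that the only freedom at the critical leaf $c$ is the choice of which child of $f(c)$ becomes the image of the new critical vertex, with rule (4e) then forcing the remaining $2m-2$ children of $c$ to cover the other children of $f(c)$ two-to-one. Your version is simply more explicit about the bookkeeping (e.g.\ the position $2j_0-1$ of the critical child and the verification that $f(v)$ already has its children in $T$), but the underlying argument is the same.
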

\begin{proof}
To extend $T$ to $T'$ we just add children to each of the leaves of $T$. For
each ordinary leaf $v$ we add a copy of the children of $f(v)$. For the
unique critical leaf $c$ we must choose a child $e$ of $f(c)$ and then add
as children of $c$ one copy of $e$, and two copies of every other child of $f(c)$.
The copy of $e$ becomes the unique critical child of $c$ in $T'$. The 
functions $f$ and $\ell$ extend to these new leaves uniquely.
\end{proof}

The next two lemmas give direct proofs in the language of tree polynomials of the
identities $\sum_\ell N_q(n,m)\cdot 2^m = q^n$ and $\sum_\ell N_q(n,m) = 1+(q-2)(q^n-1)/(q-1)$.
Both identities follow immediately from Proposition~\ref{prop:bijection} since
the first just says that the sum of the lengths of the components of
$S^1 \mod \Lambda_{T,n}$ is equal to 1, and the second just says that $\Lambda_T$ has
$(q-2)q^{n-1}$ leaves of depth $n$, both of which follow immediately from the definitions.

\begin{lemma}[Multiplication by $q$]\label{lemma:multiplication}
Let $T$ have degree $q$.
For each non leaf vertex $v$ with children $w_i$ we have 
$q\cdot\ell(v)= \sum_i \ell(w_i)$.
Consequently $\sum_\ell N_q(n,m)\cdot 2^m = q^n$.
\end{lemma}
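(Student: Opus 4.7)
The plan is to prove the local identity $q\ell(v) = \sum_i \ell(w_i)$ first, by induction on the depth of $v$, and then bootstrap from it via the Extension Lemma (Lemma~\ref{lemma:extension}) to the global identity $\sum_m N_q(n,m)\cdot 2^m = q^n$.

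For the base case $v=\text{root}$, the degree axiom says the root has exactly $q-1$ children: one critical child with $\ell=2$ and $q-2$ ordinary children each with $\ell=1$, so $\sum_i\ell(w_i)=2+(q-2)=q=q\cdot\ell(\text{root})$. For the inductive step, $f(v)$ has strictly smaller depth than $v$, and I would split into two cases. If $v$ is ordinary and not a leaf, then by axiom (2c) all children $w_i$ are ordinary, so $\ell(w_i)=\ell(f(w_i))$, and $f$ maps the $w_i$ bijectively to the children of $f(v)$; the inductive hypothesis applied to $f(v)$ gives $\sum_i\ell(w_i) = \sum_u\ell(u) = q\ell(f(v)) = q\ell(v)$. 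If $v$ is critical, non-root, non-leaf, then $v$ has a unique critical child $c_v$ (with $\ell(c_v)=2\ell(f(c_v))$) while its ordinary children are an $f$-preimage that is $2\text{-to-}1$ onto the children of $f(v)$ other than $f(c_v)$; summing gives
\[
\sum_i\ell(w_i) = 2\ell(f(c_v)) + 2\!\!\sum_{u\ne f(c_v)}\!\!\ell(u) = 2\sum_{u}\ell(u) = 2q\ell(f(v)) = q\ell(v),
\]
using $\ell(v)=2\ell(f(v))$.

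For the global identity, I would set $S_n:=\sum_T\ell(T)$, the sum taken over all degree-$q$ tree polynomials of depth $n+1$, so that $S_n=\sum_m N_q(n,m)\cdot 2^m$ by the definition of $N_q(n,m)$ together with Proposition~\ref{prop:bijection}. The Extension Lemma tells us that an extension $T'$ of $T$ (from depth $n+1$ to depth $n+2$) is determined by a choice of child $e$ of $f(c)$, where $c$ is the critical leaf of $T$; moreover the new critical leaf $c'$ of $T'$ satisfies $f(c')=e$, hence $\ell(T')=\ell(f(c'))=\ell(e)$. Applying the local identity to the non-leaf vertex $f(c)$,
\[
\sum_{T'\text{ extends }T}\ell(T') = \sum_{e\text{ child of }f(c)}\ell(e) = q\,\ell(f(c)) = q\,\ell(T),
\]
so summing over $T$ yields the recursion $S_{n+1}=qS_n$. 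The base case $S_0=1$ is immediate since there is a unique depth-$1$ tree polynomial of degree $q$ and its postcritical length is $\ell(\text{root})=1$. Thus $S_n=q^n$.

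The only real obstacle is keeping the bookkeeping straight in the critical-vertex case of the local identity, specifically correctly accounting for the unique fiber of size one (over $f(c_v)$) versus the size-two fibers; the $\ell(c_v)=2\ell(f(c_v))$ contribution from the critical child precisely fills in the ``missing'' copy to convert the $2\text{-to-}1$ sum over ordinary children into a clean factor of $2$ times the full sum over children of $f(v)$, which is what allows the inductive hypothesis to apply cleanly.
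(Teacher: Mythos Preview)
Your proof is correct and follows essentially the same approach as the paper: induction on depth for the local identity $q\ell(v)=\sum_i\ell(w_i)$ (with the root as base case), followed by the Extension Lemma to derive the recursion $\sum_{T'}\ell(T')=q\ell(T)$ and hence $S_{n+1}=qS_n$. You have simply supplied the details of the inductive step (the ordinary/critical case split) that the paper leaves implicit.
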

\begin{proof}
There is a unique tree polynomial of depth $1$ and degree $d$. The
root has $\ell(\text{root})=1$, and it has $q-1$ children with $\ell=2,1,\cdots 1$.
Thus $q\cdot \ell(v)=\sum_i \ell(w_i)$ is true for the root vertex, 
and by induction on depth, it is true for each ordinary or critical non leaf vertex.

For any $T$, the extensions $T'$ of $T$ are in bijection with the 
children of the postcritical vertex, and the formula we just proved 
shows $\sum \ell(T') = q \ell(T)$.
\end{proof}

\begin{lemma}[Number of children]\label{lemma:number_of_children}
Let $T$ have degree $d$.
For each non leaf vertex $v$ the number of children of $v$ is $(q-2)\ell(v)+1$.
Consequently $$\sum_m N_q(n,m) = 1+(q-2)(q^n-1)/(q-1)$$
\end{lemma}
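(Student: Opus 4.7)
The plan is to prove the per-vertex formula $|\mathrm{children}(v)|=(q-2)\ell(v)+1$ for every non-leaf $v$ by induction on the depth of $v$, and then derive the global identity by summing the extension counts given by Lemma~\ref{lemma:extension}.

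For the induction, the base case is the root, where $\ell(\mathrm{root})=1$ and the root has exactly $q-1$ children by the definition of degree, matching $(q-2)\cdot 1+1$. For the inductive step, take a non-leaf $v$ at depth $\geq 1$. Since $v$ has at least one child and every child of $v$ must map under $f$ to a child of $f(v)$, the vertex $f(v)$ is itself non-leaf and lies at strictly smaller depth, so the inductive hypothesis applies to it. Now split into two cases. If $v$ is ordinary, the self-map axioms guarantee an order-preserving bijection between the children of $v$ and those of $f(v)$, and $\ell(v)=\ell(f(v))$, so the formula transports directly. If $v$ is critical and not the root, the map on children is onto and $2$-to-$1$ except at the critical child, forcing $|\mathrm{children}(v)|=2|\mathrm{children}(f(v))|-1$; combining this with $\ell(v)=2\ell(f(v))$ and the inductive hypothesis gives $(q-2)\ell(v)+1$ after one line of algebra. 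The only bookkeeping to watch is the case when $v$ is itself a child of the root, where $f(v)=\mathrm{root}$ and the base case value $q-1$ must feed into both subcases correctly.

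For the global identity, let $P_n:=\sum_m N_q(n-1,m)$ denote the total number of depth-$n$ tree polynomials. By Lemma~\ref{lemma:extension}, the number of depth-$(n+1)$ extensions of a fixed depth-$n$ tree polynomial $T$ equals the number of children of its postcritical vertex $f(c)$, which by the per-vertex formula and $\ell(f(c))=\ell(T)$ is $(q-2)\ell(T)+1$. Summing over all $T$ of depth $n$ and using Lemma~\ref{lemma:multiplication} to evaluate $\sum_T \ell(T)=q^{n-1}$ yields the recursion $P_{n+1}=P_n+(q-2)q^{n-1}$. Telescoping from the trivial base case $P_1=1$ gives $P_{n+1}=1+(q-2)(q^n-1)/(q-1)$, i.e.\ the claimed closed form for $\sum_m N_q(n,m)$. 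The argument is essentially routine; the only real obstacle is keeping the ordinary/critical dichotomy and the special role of the root straight inside the inductive step.
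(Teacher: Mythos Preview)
Your proof is correct and follows essentially the same approach as the paper's: induction on depth for the per-vertex formula (splitting into the ordinary and critical cases), followed by the recursion $\sum_m N_q(n,m) = (q-2)q^{n-1} + \sum_m N_q(n-1,m)$ obtained via Lemma~\ref{lemma:extension} and Lemma~\ref{lemma:multiplication}. Your write-up is in fact slightly more careful in justifying that $f(v)$ is non-leaf and at smaller depth, but the structure and ideas are identical.
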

\begin{proof}
First we prove the formula relating $\ell(v)$ to the number of children of $v$.
The formula is true for the root vertex. If $v$ is ordinary then $\ell(v)=\ell(f(v))$
and $v$ has the same number of children as $f(v)$, so if the formula is true for $f(v)$
it is true for $v$. If $v$ is critical then $\ell(v) =2\ell(f(v))$ and if
$f(v)$ has $(q-2)\ell(f(v))+1$ children then $v$ has $2(q-2)\ell(f(v))+1$
children. So the formula is true by induction.

Since there are $N_q(n-1,m)$ depth $n$ degree $q$ tree polynomials of
length $\ell=2^m$, and since by Lemma~\ref{lemma:number_of_children} 
each has $(q-2)\ell(T)+1$ children, we obtain a recursion
$$\sum_m N_q(n,m) = \sum_m N_q(n-1,m)((q-2)2^m+1) = (q-2)q^{n-1}+ \sum_m N_q(n-1,m)$$
Since $\sum_m N_q(0,m)=1$ the lemma follows.
\end{proof}

\begin{lemma}[Length subdoubles]\label{lemma:subdoubles}
Every child $w$ of $v$ has $\ell(w)\le 2\ell(v)$ with equality iff every sibling
of $w$ has $\ell=1$.
\end{lemma}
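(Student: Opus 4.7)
The plan is to induct on the depth of $v$. The key observation is that the self-map $f$ drops depth by exactly one away from the root, so the inductive hypothesis always applies to the children of $f(v)$. For the base case, take $v$ to be the root: by the definition of degree, the root has one critical child with $\ell = 2$ and $q-2$ ordinary children with $\ell = 1$, which immediately gives both the inequality $\ell(w) \le 2 = 2\ell(v)$ and the characterization of equality.

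For the inductive step, split on whether $v$ is ordinary or critical. If $v$ is ordinary, then every child of $v$ is ordinary, and $f$ restricts to an order-preserving bijection from the children of $v$ onto the children of $f(v)$ that preserves $\ell$; combined with $\ell(v) = \ell(f(v))$, the inductive hypothesis applied to $f(v)$'s children transfers verbatim. If $v$ is critical and non-root, then $v$ has a unique critical child $c$ satisfying $\ell(c) = 2\ell(f(c)) \le 4\ell(f(v)) = 2\ell(v)$ by induction, together with paired ordinary children each satisfying $\ell(w) = \ell(f(w)) \le 2\ell(f(v)) = \ell(v) < 2\ell(v)$.

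For the equality case, the plan is to trace the biconditional $\ell(c) = 2\ell(v) \iff \ell(f(c)) = 2\ell(f(v))$, apply the inductive equality statement to the children of $f(v)$ to see that this happens iff every ordinary child of $f(v)$ has $\ell = 1$, and then observe that the ordinary children of $v$ come in pairs with $\ell$ equal to their $f$-images, so this in turn is exactly the condition that every sibling of $c$ in $v$ has $\ell = 1$. For ordinary children $w$ of a critical $v$, the strict bound $\ell(w) \le \ell(v) < 2\ell(v)$ rules out equality, consistent with the fact that their critical sibling $c$ has $\ell(c) \ge 2 > 1$.

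The main obstacle is the equality bookkeeping in the critical case: checking that the $2{\to}1$ pairing of ordinary children genuinely respects $\ell$-values, so that the condition ``all siblings of $c$ have $\ell = 1$'' in $v$ pulls back cleanly to the analogous condition for $f(c)$ in $f(v)$. The inequality itself is essentially immediate from the doubling rule for critical vertices and the inductive bound.
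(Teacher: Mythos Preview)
Your proof is correct, but it takes a different route from the paper's. The paper's argument is a one-line pigeonhole: by Lemma~\ref{lemma:multiplication} the lengths of the children of $v$ sum to $q\ell(v)$, and by Lemma~\ref{lemma:number_of_children} there are $(q-2)\ell(v)+1$ of them, each with $\ell \ge 1$; subtracting the minimum contribution of the siblings gives $\ell(w) \le q\ell(v) - (q-2)\ell(v) = 2\ell(v)$, with equality exactly when every sibling has $\ell = 1$. Your approach instead inducts on depth and tracks the structure of $f$ directly, splitting on whether $v$ is ordinary or critical. This is more self-contained --- you never invoke the two counting lemmas --- but it costs you the case analysis and the bookkeeping for the equality case (which you handle correctly: the $2$--to--$1$ surjection of ordinary children of $v$ onto the siblings of $f(c)$ does preserve $\ell$, so the ``all siblings have $\ell=1$'' condition transfers cleanly). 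The paper's route is shorter precisely because it cashes in on work already done; yours would be preferable if those lemmas were not already in hand.
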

\begin{proof}
By Lemma~\ref{lemma:multiplication} and Lemma~\ref{lemma:number_of_children}
$w$ has $(q-2)\ell(v)+1$ children, whose lengths sum to $q\ell(v)$.
\end{proof}

\begin{remark}
It is worth pointing out a close relationship between tree polynomials (as defined
above) and the {\em polynomial-like tree maps} of DeMarco--McMullen \cite{DeMarco_McMullen}.
The main difference seems to be that the latter objects forget the planar structure
(i.e.\/ the data of the ordering on each set of siblings). One should also mention that
there is a close relationship between the {\em dynamical elaminations} 
(see \cite{Calegari_sausage}, \S~4.4) and the {\em pictographs} of DeMarco--Pigrim
which are in turn closely related to the pattern and tableau of Branner--Hubbard 
\cite{Branner_Hubbard_2}.
The tautological elaminations we discuss in this article are (roughly speaking) 
related to dynamical elaminations as the Shift locus is related to individual shift polynomials.
\end{remark}

\section{$F$-sequences}

\begin{definition}[Critical Vein]
Let $T$ be a tree polynomial of depth $n$. The {\em critical vein} is the
segment of $T$ containing all the critical vertices. We denote it by $\gamma$
and label the critical points on $\gamma$ as $c_i$ where $c_0$ is the root and
$c_n$ is the critical leaf.
\end{definition}

For each vertex $w$ of $T$, define $F(w)$ to be equal to $f^k(w)$ for the
least positive $k$ so that $f^k(w)$ is critical. Thus we can think of $F$
as a map from the critical vein to itself, and by abuse of notation, for
integers $i,j$ we write $F(i)=j$ if $F(c_i)=c_j$ so that we can and do think
of $F$ as a function from $\lbrace 0,\cdots, n\rbrace$ to itself. We also write 
$\ell(i)$ for $\ell(c_i)$.

\begin{lemma}[Properties of $F$]
$F(0)=0$ and for every positive $i$, $F(i)<i$ and $F(i+1)\le F(i)+1$.
Furthermore, $\ell(i)=2\ell(F(i))$ for $i>0$ so that $\ell(i)=2^k$ 
where $k$ is the least integer so that $F^k(i)=0$
\end{lemma}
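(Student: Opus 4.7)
My plan is to extract one structural observation and then derive all four assertions from it, mostly by bookkeeping. The observation is that $f$ and the ``parent'' operation commute, i.e.\ $\operatorname{parent}(f(v))=f(\operatorname{parent}(v))$ whenever both sides are defined; this is just self-map axiom (c) restated. Two immediate consequences: first, $f$ strictly decreases depth by $1$ on any non-root vertex, so iterating from $c_i$ reaches the root in exactly $i$ steps; second, $f^p(c_{i+1})$ is always a child of $f^p(c_i)$ for as long as both remain above the root.

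From this it is immediate that $F(0)=0$ (self-map axiom (a) plus the root being critical) and that $F(i)<i$ for $i>0$ (the first critical iterate of $c_i$ occurs at some step $k\ge 1$, so at depth $i-k<i$). The inequality $F(i+1)\le F(i)+1$ is the step I expect to be the main obstacle, and here is my approach. Set $p:=i-F(i)\ge 1$, so that $f^p(c_i)=c_{F(i)}$ and for $0<p'<p$ the vertex $f^{p'}(c_i)$ is ordinary by the very definition of $F(i)$. The parallel-orbit principle makes $f^{p'}(c_{i+1})$ a child of this ordinary vertex, and critical axiom (c) --- no ordinary vertex has a critical child --- forces $f^{p'}(c_{i+1})$ to be ordinary as well. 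Hence the first critical iterate of $c_{i+1}$ occurs at step $\ge p$. At step exactly $p$, $f^p(c_{i+1})$ is a child of $c_{F(i)}$: if it is the unique critical child $c_{F(i)+1}$ then $F(i+1)=F(i)+1$; otherwise it is ordinary, and subsequent $f$-iteration strictly decreases depth before becoming critical, so the first critical iterate lies at depth $\le F(i)$ and $F(i+1)\le F(i)$. In either case $F(i+1)\le F(i)+1$.

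For the length identity, length axiom (c) gives $\ell(c_i)=2\ell(f(c_i))$ because $c_i$ is critical, and length axiom (b) keeps $\ell$ constant as $f$ iterates through the intermediate ordinary vertices from $f(c_i)$ to $F(c_i)$; hence $\ell(i)=2\ell(F(i))$. Iterating this identity (valid as long as the current index is positive) and stopping at the least $k$ with $F^k(i)=0$ --- such a $k$ exists because $F$ strictly decreases positive indices --- gives $\ell(i)=2^k\ell(c_0)=2^k$ using $\ell(\text{root})=1$.
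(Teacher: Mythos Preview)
Your proof is correct and follows essentially the same approach as the paper's: the paper's three-sentence argument observes that $f$ strictly decreases depth (giving $F(i)<i$), that $F(c_{i+1})$ coincides with $F(w)$ for some child $w$ of $c_{F(i)}$ (your parallel-orbit observation, giving $F(i+1)\le F(i)+1$), and then quotes the length axioms. You have simply unpacked these steps with more care, in particular making explicit why the intermediate iterates $f^{p'}(c_{i+1})$ for $0<p'<p$ must be ordinary.
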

\begin{proof}
Since $f(w)$ has smaller depth than $w$ unless $w$ is the root, $F(i)<i$.
Furthermore, $F(c_{i+1})$ is equal to $F(w)$ for some child $w$ of 
$F(c_i)$, so $F(i+1)\le F(i)+1$. 

Finally, $\ell(w)=\ell(f(w))$ when $w$ is ordinary or the root, 
and $\ell(w)=2\ell(f(w))$ when $w$ is critical and not the root.
\end{proof}

Let $\gamma^+$ denote the union of $\gamma$ together with the siblings
of every critical vertex. We may think of it as a digraph (i.e.\/ a directed
graph), where every edge points away from the root. 
Define $\Gamma$ to be the quotient
of $\gamma^+$ obtained by identifying every sibling $w$ of a critical 
vertex with its image $F(w)$. Note that $\Gamma$ is a digraph. 

The next proposition gives a characterization of the functions $F$ that 
can arise from tree polynomials.

\begin{proposition}[$F$-sequence]\label{prop:F_sequence}
A function $F$ from $\lbrace 0,1,\cdots,n \rbrace$
to itself arises from some tree polynomial $T$ of depth $n$ if and only if
it satisfies the following properties:
\begin{enumerate}
\item{$F(0)=0$ and $F(1)=0$;}
\item{each $j$ has a finite set of {\em options} which are the admissible
values of $F(i+1)$ when $F(i)=j$;
\begin{enumerate}
\item{the options of $0$ are $0$ and $1$;}
\item{if $F(i)=0$ then the options of $i$ are $i+1$ and whichever option of 
$0$ is not equal to $F(i+1)$;}
\item{if $F(i)\ne 0$ and $F(i+1)$ is not equal to $F(i)+1$ then the options
of $i$ are $i+1$ together with the options of $F(i)$;}
\item{if $F(i)\ne 0$ and $F(i+1)= F(i)+1$ then the options of $i$ are 
$i+1$ together with all the options of $F(i)$ except $F(i)+1$.}
\end{enumerate}}
\end{enumerate}
A function $F$ is an {\em $F$-sequence} if it satisfies these properties.
\end{proposition}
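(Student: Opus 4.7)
The plan is to induct on $n$, establishing both directions of the equivalence simultaneously via an explicit combinatorial interpretation of the options. Specifically, I will show that for any tree polynomial $T$ realizing the $F$-sequence up to depth $n$, the set $O_j$ coincides with the set of $F$-values of the children of any vertex $v \in T$ satisfying $F(v) = c_j$, under the convention that a critical child contributes (as its label) the index of the critical-vein vertex it equals. The ordinary-vertex bijection (children of an ordinary $v$ are in canonical $f$-bijection with children of $f(v)$, and this preserves the $F$-value of each child) ensures the set is well-defined independently of which $v$ is chosen.

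The base cases $n \le 1$ are immediate: $F(0)=0$ and $F(1)=0$ follow from $f(\text{root})=\text{root}$ and $f(v)=\text{root}$ for every child $v$ of the root, while $O_0=\{0,1\}$ comes from the critical child $c_1$ (contributing the label $1$) together with the $q-2$ ordinary children of the root (each having $F$-value $0$).

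For the inductive step, Lemma~\ref{lemma:extension} states that any depth-$(n+1)$ tree polynomial $T$ is obtained from a depth-$n$ tree polynomial $T'$ by choosing a child $e$ of $f(c_n)$ to serve as $f(c_{n+1})$. Under the inductive identification of $O_{F(n)}$ with the set of $F$-values of children of $f(c_n)$, we have $F(n+1) = F(e) \in O_{F(n)}$ in the forward direction, and conversely any admissible value of $F(n+1)$ selects such an $e$. To compute $O_n$: the ordinary children of $c_n$ are in 2-to-1 correspondence under $f$ with the children of $f(c_n)$ other than $e$, so their $F$-values recover $O_{F(n)}$ modified by the appropriate deletion. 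Rule (d) (when $F(n+1) = F(n)+1$) corresponds to $e$ being the critical-type child of $f(c_n)$: its label $F(n)+1$ is unique and hence drops out of $O_n$. Rule (c) (when $F(n+1) \ne F(n)+1$) corresponds to $e$ being ordinary: the 2-to-1 duplicate preserves $F(n+1)$ in $O_n$, so nothing is deleted. Rule (b) covers the special case $F(n)=0$, handled separately because children of the root do not participate in a 2-to-1 self-map.

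The main obstacle is verifying that the combinatorial interpretation of the options coincides precisely with the recursion defined by rules (a)-(d). The asymmetry between (c) and (d) -- stemming from the uniqueness of the critical child versus the 2-to-1 multiplicity of ordinary children -- is the delicate point, especially when $f(c_n)$ is not $c_{F(n)}$ directly but an ordinary vertex iterating to it. In that case, one must invoke the ordinary-vertex $f$-bijection repeatedly to transfer the structure of options through each ordinary level, confirming that $O_j$ is indeed independent of the particular vertex $v$ with $F(v) = c_j$ used to read it off the tree.
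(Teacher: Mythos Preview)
Your proposal is correct and follows essentially the same approach as the paper: both grow the tree polynomial one depth at a time via the Extension Lemma, identify the options of $j$ with the (multiset of) labels of the children of $c_j$, and use the $2$--$1$ structure of $f$ on the children of a critical vertex to see that ordinary labels persist in pairs while the critical label $j{+}1$ is unique, which yields rules (c) and (d) --- the paper packages this bookkeeping as the auxiliary digraph $\Gamma$ with edge multiplicities, whereas you work directly with labels. One small wording fix: in your opening paragraph the well-definedness claim should be restricted to $v=c_j$ itself or to \emph{ordinary} $v$ with $F(v)=c_j$, since for a critical $v=c_i$ with $F(i)=j$ the label set of its children is $O_i$ (it contains $i{+}1$), not $O_j$; but this is precisely how you actually use the interpretation in the inductive step (applied to $f(c_n)$), so the argument itself is sound.
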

\begin{proof}
Imagine growing a tree $T$ by iterated extensions from a
tree of depth $1$. The extensions at each stage are the children of
the critical image $f(v)$, which in turn may be identified with the 
children of $F(v)$.

When we grow $T$ of depth $i$ to $T'$ we grow $\Gamma$ to $\Gamma'$ by adding a new
edge from $i$ to a new vertex $i+1$, and adding two new edges 
from $i$ to $j$ for each edge from $F(i)$ to $j$ {\em except} 
for the edge from $F(i)$ to $F(i+1)$ (outgoing edges at $F(i)$ may be identified
with the {\em options} of $F(i)$ as above). The root $0$ is joined by a single
edge both to itself and to $1$, but every other vertex $i$ is joined by a
single edge to $i+1$ and by an {\em even} number of edges to each of its
options (this can be seen by induction). The proposition follows.
\end{proof}

\begin{remark}
The referee has pointed out that the $F$-function carries essentially the same information 
as the Yoccoz' $\tau$-function derived from the tableau of Branner--Hubbard 
(see e.g.\/ \cite{Branner_Hubbard_2}, \S~4.2 and \cite{DeMarco_McMullen}, \S~11).
Proposition~\ref{prop:F_sequence} is essentially equivalent to Prop.~2.1 from \cite{DeMarco_Schiff}.
\end{remark}

The map from tree polynomials to $F$-sequences is many to one, since
for every vertex $i$ of $\Gamma$ except the root, if there is an edge
from $i$ to $j \ne i+1$ then there are at least two such edges. 
Nevertheless, if $F$ is an $F$-sequence corresponding to $T$ of depth $n$,
the extended sequence defined by $F(n+1)=F(n)+1$ corresponds to the unique
extension of $T$ for which $F(c_{n+1})$ is the critical child of $F(c_n)$.

\begin{example}
$\ell(T)=1$ if and only if $F(n)=0$ where $n$ is the depth of the tree.
The number of depth $n$ tree polynomials with this property is $N_q(n-1,0)$ by definition.
\end{example}

\begin{example}[Maximal Type]
There is a unique depth $n$ tree polynomial $T$ of any degree with 
$2\ell(T)=\ell(n)=2^n$ namely the tree polynomial for which $F(i)=i-1$ for
all positive $i$. Thus $N_q(n,n)=1$. We call these trees of {\em maximal type}.

The components of $S^1 \mod \Lambda_{T,n}$ corresponding to the 
trees of maximal type are clearly evident in Figure~\ref{lams} (they correspond
to the large components of `white space').
\end{example}

\begin{example}[Degree 3 Maximal Component]\label{example:maximal_component}
Consider the tree polynomial sequence of maximal type of degree 3.
Let's work, as in \S~\ref{taut_lam_subsection} in the normalization
$C=\lbrace 0,1/3\rbrace$. For each $n$ the result of pinching
$S^1\mod \Lambda_{T,n}(C)$ has a unique component of length $2^n/3^n$, and
this sequence of components corresponds precisely to the (degree 3) tree
polynomial sequence of maximal type; we call this component, for each $n$,
the {\em maximal component}.

For each $n$ we can let $K_n(C)$ be the preimage of the maximal component in $S^1$.
As a subset of $S^1$ this depends on $C$, but for the specific normalization
$C=\lbrace 0,1/3\rbrace$ it does not, and we abbreviate $K_n(C)=K_n$.
It turns out that there is a very explicit description of $K_n$: it consists
of numbers in $[0,1)$ whose base 3 expansion is of two types:
\begin{enumerate}
\item{the first $n$ digits contain no 0; or}
\item{numbers of the form $\cdot x \dot{0}$ where $x$ is a string of $<n$ digits
containing no 0.}
\end{enumerate}
In other words:
$$K_1 = 0\cup [1/3,1), \quad K_2 = 0\cup 1/3\cup [4/9,2/3]\cup [7/9,1), \quad \text{etc.}$$
If we denote $K:=\cap_n K_n$, then $K$ is the set of numbers in $[0,1]$ 
whose base three expansion either contains no 0s, or is of the form $\cdot x\dot{0}$
for some finite string $x$ containing no 0s. See Figure~\ref{bigbrot}.
Compare with the left side of Figure~\ref{elam_and_pinched}. This component
corresponds to the `lopsided Mandelbrot' in Figure~\ref{approx_slice}.
\begin{figure}[htpb]
\centering
\includegraphics[scale=0.7]{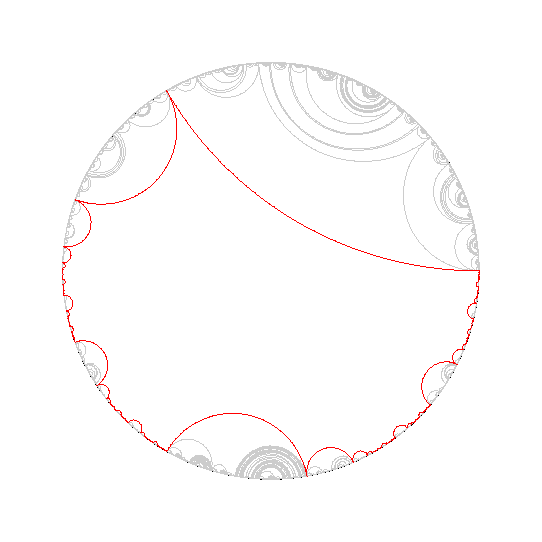}
\caption{The maximal component $K$ of $S^1 \mod \Lambda_T$}
\label{bigbrot}
\end{figure}

The `boundary' of the region $K$ consists of a union of $2^{n-1}$ short components
of each positive depth $n$, as follows from Lemma~\ref{lemma:short_unbordered},
and no other components (because otherwise the length of the maximal component
would be strictly less than $2^n/3^{-n}$ for some $n$).
\end{example}

For each positive $k$ let $i_k$ denote the least index (if any)
for which $\ell(i_k)=2^k$. Note that $i_0=0$.

\begin{lemma}[Increments Grow]\label{lemma:increments_grow}
$F(i_k)=i_{k-1}$ for $k>0$. Consequently $|i_{k+1}-i_k| \ge |i_k - i_{k-1}|$.
\end{lemma}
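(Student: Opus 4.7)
The plan is to prove the two inequalities $F(i_k)\ge i_{k-1}$ and $F(i_k)\le i_{k-1}$ separately, then deduce the consequence from the fact that $F$ can increase by at most $1$ in each step.

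The easy direction uses the formula $\ell(i)=2\ell(F(i))$: since $\ell(i_k)=2^k$, we get $\ell(F(i_k))=2^{k-1}$, so by minimality of $i_{k-1}$ we must have $F(i_k)\ge i_{k-1}$.

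For the opposite inequality, I first establish a preliminary monotonicity statement: for every $j<i_k$ we have $\ell(j)\le 2^{k-1}$. Indeed, suppose $\ell(j)=2^m$ for some $m\ge k$. Then iterating the relation $\ell=2\ell\circ F$ produces a strictly decreasing chain $j>F(j)>F^2(j)>\cdots>F^{m-k}(j)$ whose last term satisfies $\ell(F^{m-k}(j))=2^k$, contradicting $i_k>j\ge F^{m-k}(j)$ together with the definition of $i_k$. Applying $\ell(F(j))=\ell(j)/2\le 2^{k-2}$ and the same minimality argument to $i_{k-1}$, we conclude that $F(j)<i_{k-1}$ for every $j<i_k$. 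Specializing to $j=i_k-1$ and using $F(i_k)\le F(i_k-1)+1$ from Proposition (Properties of $F$) gives $F(i_k)\le i_{k-1}$. Combined with the easy direction, this yields $F(i_k)=i_{k-1}$.

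For the consequence, I apply the bound $F(i+1)\le F(i)+1$ iteratively along the range $i_k,i_k+1,\dots,i_{k+1}$. The value of $F$ must increase (on net) from $F(i_k)=i_{k-1}$ to $F(i_{k+1})=i_k$, a net increase of $i_k-i_{k-1}$; since each step can contribute at most $+1$, the number of steps $i_{k+1}-i_k$ is at least $i_k-i_{k-1}$. (Steps where $F$ decreases only make the bound stronger.)

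The main obstacle is the lemma that $\ell(j)\le 2^{k-1}$ for $j<i_k$: this is the place where the minimality of $i_k$ must be leveraged nontrivially, and without it the argument that $F(j)<i_{k-1}$ for $j<i_k$ collapses. Everything else is a direct manipulation of the defining properties of $F$ from the previous proposition.
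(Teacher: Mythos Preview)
Your argument has a genuine gap in the step where you deduce $F(j)<i_{k-1}$ from $\ell(F(j))\le 2^{k-2}$ ``and the same minimality argument to $i_{k-1}$.'' The minimality of $i_{k-1}$ says only that it is the \emph{first} index with $\ell=2^{k-1}$; it does not say that every index $\ge i_{k-1}$ has $\ell\ge 2^{k-1}$. Concretely, take the valid $F$-sequence $F(0)=F(1)=0$, $F(2)=1$, $F(3)=0$: then $i_2=2$ while $\ell(3)=2\le 2^{k-2}$ for $k=3$, so the implication ``$\ell(m)\le 2^{k-2}\Rightarrow m<i_{k-1}$'' is simply false. Your preliminary lemma (step~1) gives the forward implication $j'<i_{k-1}\Rightarrow\ell(j')\le 2^{k-2}$, which is the wrong direction for what you need.

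Your intermediate claim that $F(j)<i_{k-1}$ for every $0<j<i_k$ is nevertheless \emph{true}, but proving it requires a different idea: if $F(j)\ge i_{k-1}$ for some such $j$, then since $F(0)=0$ and $F$ increases by at most~$1$ per step, some $j'\le j<i_k$ satisfies $F(j')=i_{k-1}$ exactly, whence $\ell(j')=2\ell(i_{k-1})=2^k$, contradicting the minimality of $i_k$. This intermediate-value step is precisely the paper's argument, applied there directly to the single index $i_k$ (under the assumption $F(i_k)>i_{k-1}$) without passing through your step~1. So once the gap is filled, your route collapses to the paper's; the detour through the bound $\ell(j)\le 2^{k-1}$ is not needed. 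Your easy direction, the use of $F(i_k)\le F(i_k-1)+1$ in step~3, and the deduction of the consequence are all correct.
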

\begin{proof}
By definition $F(i_k)$ is some value of $j$ with $\ell(j)=2^{k-1}$. But
if $j>i_{k-1}$ there was some $i' < i_k$ with $F(i')=i_{k-1}$, contrary to the
definition of $i_k$.

The inequality follows from $F(i+1) \le F(i)+1$.
\end{proof}

\begin{definition}[$S$ and $B$]
Let $F$ be an $F$-sequence. Let $S$ be the set of indices $i$ such that 
$F(i+1)=F(i)+1$ and let $B$ be the rest. Note that $0$ is in $B$.
\end{definition}

The {\em prior options} of $i$ are the options other than $i+1$. We denote
these by $P(i)$. Thus, if $i \in S$ then $P(i)=P(F(i))$.

\begin{lemma}[Backslide]\label{lemma:backslide}
Let $F$ be an $F$-sequence and let $i\in B$.
Then $F(i+1)<F(i)$ and $F(i+1) \in P(b)$ where $b=F^k(i)$ for some $k$
and $b \in B$.
\end{lemma}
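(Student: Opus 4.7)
The plan is to prove the two assertions separately. For the inequality $F(i+1)<F(i)$, the argument is essentially a restatement of the definitions: any $F$-sequence satisfies $F(i+1)\le F(i)+1$ (the ``Properties of $F$'' lemma), and $i\in B$ means $F(i+1)\ne F(i)+1$, so $F(i+1)\le F(i)$. (Strictly one can have $F(i+1)=F(i)=0$ when $F(i)=0$, so the ``$<$'' in the statement is best read as ``$F(i+1)<F(i)+1$''.)

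The substance of the lemma is the membership claim. My starting point would be the observation that for any $i\in B$, one has $F(i+1)\in P(F(i))$. This follows immediately from the $F$-sequence condition $F(i+1)\in\text{options}(F(i))$ together with the fact that $F(i)+1$ is always an option of $F(i)$: stripping that option off (using $F(i+1)\ne F(i)+1$) places $F(i+1)$ in $\text{options}(F(i))\setminus\{F(i)+1\}=P(F(i))$.

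The second ingredient is the identity $P(j)=P(F(j))$ for $j\in S$, which is already asserted just before the lemma. For completeness I would verify it from the rules of Proposition~\ref{prop:F_sequence}: when $F(j)>0$, rule (d) gives $\text{options}(j)=\{j+1\}\cup(\text{options}(F(j))\setminus\{F(j)+1\})$, so removing $j+1$ yields $P(F(j))$; when $F(j)=0$, the condition $j\in S$ forces $F(j+1)=1$, and rule (b) gives $\text{options}(j)=\{j+1,0\}$, so $P(j)=\{0\}=P(0)$. Notice that the analogous identity fails precisely when $j\in B$ with $F(j)=0$ (there one gets $P(j)=\{1\}\ne\{0\}=P(0)$), which is exactly why the lemma requires the chain to terminate at a $B$-element.

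The proof then concludes by iteration. Starting from $F(i+1)\in P(F(i))$, I walk up the chain $F(i),F^2(i),F^3(i),\ldots$, using the identity at each $S$-step to propagate membership of $F(i+1)$ in the prior-option set, and stop at the first $k\ge 1$ with $F^k(i)\in B$. This $k$ exists because $F(j)<j$ for $j>0$ forces the iterates to descend until reaching $0$, which is in $B$. Setting $b=F^k(i)$ gives the required $b\in B$ with $F(i+1)\in P(F(i))=\cdots=P(F^k(i))=P(b)$. The main subtlety throughout is the boundary case $F(j)=0$ in verifying the propagation identity; once that is in hand the rest of the argument is a straightforward descent.
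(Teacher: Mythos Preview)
Your proof is correct and follows essentially the same route as the paper: from $i\in B$ deduce $F(i+1)\in P(F(i))$, then use the identity $P(j)=P(F(j))$ for $j\in S$ to propagate along the descending chain $F(i),F^2(i),\dots$ until hitting the first $b=F^k(i)\in B$. Your treatment is more careful than the paper's in two respects: you verify the propagation identity explicitly in the boundary case $F(j)=0$, and you correctly flag that the strict inequality $F(i+1)<F(i)$ can fail (e.g.\ when $F(i)=0$ one may have $F(i+1)=0$); the paper asserts the strict inequality without comment, but only the membership claim is used downstream.
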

\begin{proof}
Since $i \in B$ we must have $F(i+1) \in P(F(i))$ so that necessarily 
$F(i+1)<F(i)$. Furthermore, if $F(i) \in S$, then $P(F(i))=P(F^2(i))$ and
so on by induction until the first $k$ so that $F^k(i) \in B$.
\end{proof}

Using $F$-sequences we may deduce the following `gap' theorem, that was
observed experimentally.

\begin{theorem}[Gap]\label{thm:gap_theorem}
$N_q(n,m)=0$ for $\lfloor n/2 \rfloor < m < n$.
\end{theorem}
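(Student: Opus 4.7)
The plan is to analyze the reverse $F$-orbit of $c_{n+1}$. Setting $j_k := F^{m+1-k}(n+1)$ for $k=0,\dots,m+1$, the strict decrease $F(i)<i$ together with $\ell(c_{F(i)}) = \ell(c_i)/2$ yields a strictly increasing sequence $0 = j_0 < j_1 < \dots < j_{m+1} = n+1$ with $\ell(c_{j_k}) = 2^k$. The gaps $e_k := j_k - j_{k-1}$ are nondecreasing in $k$: running $F(i+1) \le F(i)+1$ downward $j_{k+1}-j_k-1$ times from $F(j_{k+1}) = j_k$ gives $F(j_k+1) \ge 2j_k - j_{k+1} + 1$, while $F(j_k+1) \le F(j_k)+1 = j_{k-1}+1$ directly; together these imply $e_{k+1} \ge e_k$. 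This is a reverse-orbit analogue of Lemma~\ref{lemma:increments_grow}.

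If $e_k = 1$ for every $k$, then $j_k = k$ and $F(i) = i-1$ throughout, so $T$ is of maximal type and $m = n$. Otherwise let $K$ be the smallest $k$ with $e_k \ge 2$. Then $e_k = 1$ for $k < K$, which forces $F(i) = i-1$ for $1 \le i \le K-1$, and $e_k \ge 2$ for $k \ge K$ by monotonicity. The key quantitative claim is that $e_K \ge K$ whenever $K \ge 2$. Assuming this, monotonicity gives $e_k \ge K$ for every $k \ge K$, so
\[
n+1 = \sum_{k=1}^{m+1} e_k \;\ge\; (K-1) + K(m+2-K) \;=\; (2m+1) + (K-2)(m-K+1) \;\ge\; 2m+1,
\]
since $2 \le K \le m+1$ makes the correction nonnegative. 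The case $K=1$ is easier: $e_k \ge 2$ for all $k$ gives $n+1 \ge 2(m+1)$. Either way $n \ge 2m$, so $m \le \lfloor n/2 \rfloor$.

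The main obstacle is proving $e_K \ge K$ for $K \ge 2$. Because $F(i) = i-1$ for $1 \le i \le K-1$, iterated application of rule~(d) of Proposition~\ref{prop:F_sequence} pins down the options of each $c_i$ with $1 \le i \le K-2$ to $\{i+1, 0\}$, and consequently $F(K) \in \{K-1, 0\}$. Since $T$ is not of maximal type, the critical vein has a first index $k' > 0$ in $B$, and the preceding analysis forces $k' \ge K-1$. Lemma~\ref{lemma:backslide} then yields $F(k'+1) = 0$: the backslide-ancestor chain from $k'$ walks through the maximal initial segment down to the root, and $P(0) = \{0\}$. A short induction using rules (a)--(d) of Proposition~\ref{prop:F_sequence} shows $F(k'+j) \le j-1$, and hence $\ell(c_{k'+j}) \le 2^j$, for every $j \ge 1$. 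Now $j_K \ne K$ (otherwise $e_K = 1$), and any index $i > k'$ with $\ell(c_i) = 2^K$ must satisfy $i \ge k'+K$; combining with $k' \ge K-1$ gives $j_K \ge k'+K \ge 2K-1$, whence $e_K \ge K$ as claimed.
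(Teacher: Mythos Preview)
Your proof is correct and follows essentially the same strategy as the paper's: locate the first positive index $k'\in B$, use Lemma~\ref{lemma:backslide} together with the option structure of Proposition~\ref{prop:F_sequence} to force $F(k'+1)=0$, and then bound how quickly $\ell$ can recover. The only real difference is bookkeeping --- you run the argument along the $F$-orbit $(j_k)$ of $c_{n+1}$ and its gaps $e_k$, whereas the paper works with the first-occurrence indices $i_k$ of Lemma~\ref{lemma:increments_grow}; since both sequences are $F$-chains descending to $0$, the two computations are interchangeable, and your version has the mild advantage of making the case $m<k'$ explicit rather than leaving it to ``the desired inequality follows.''
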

\begin{proof}
Let $T$ be a tree polynomial of depth $n+1$. If $m<n$ then $T$ is not of
maximal type, so there is some first positive index $k \in B$. Note that
$i_k = k$ and in fact $i_j=j$ for all $j\le k$. 
Since $k \in B$, by Lemma~\ref{lemma:backslide}, $F(k+1)\in P(b)$
where $b\in B$ is $<k$. But then $b=0$ so $F(k+1)=0$.
It follows that $i_{k+1}\ge 2k+1$ and, successively, $i_{k+j} \ge k+j(k+1)$.
From this the desired inequality follows.
\end{proof}

\section{Tautological Tree}

Degree $q$ tree polynomials of various depth 
can themselves be identified with the vertices of a (rooted, planar)
{\em tautological tree} $\T_q$, whose vertices at depth $n$ are the 
tree polynomials of degree $q$ and depth $n$, and for each vertex $T$ of $\T_q$,
the children of $T$ are the extensions of $T$.

Note that for each vertex $T$ of $\T_q$ we can recover $\ell(T)$ from the
number of children of $T$ in $\T_q$, since this number is $(q-2)\ell(T)+1$.
So all the data of $N_q(n,m)$ can be read off from the abstract underlying
tree of $\T_q$ (in fact, even the root can be recovered from the fact that it
is the unique vertex of valence $q-1$).

The tree $\T_3$ up to depth $4$ (with vertices labeled by $\ell$ value, from
which one could easily extend it another row as an unlabeled tree) 
is depicted in Figure~\ref{TT_3}:

\begin{figure}[htpb]
\centering
\includegraphics[scale=0.8]{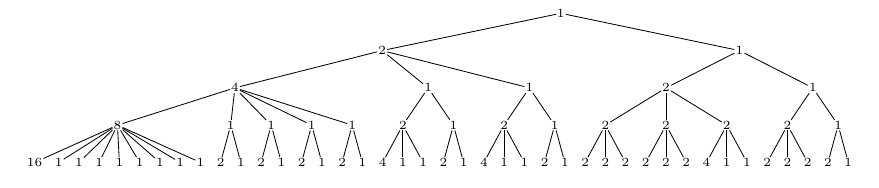}
\caption{$\T_3$ up to depth $4$.}
\label{TT_3}
\end{figure}

Every vertex labeled $1$ has two children labeled $2$ and $1$. Every vertex
labeled $2$ has three children, but these might be labeled $4,1,1$ or $2,2,2$.
Components of the complement of the shift locus $\SS_q$ in a 1-dimensional slice 
are in bijection with the ends of $\T_q$. Each such end gives rise to a 
sequence $\ell(n)$ of $\ell$-values, and when $\sum 1/\ell(n)$ diverges, 
the corresponding component consists of a single point. Such ends are called
{\em small}; those with $\sum 1/\ell(n) < \infty$ are {\em big}.
Big ends are {\em dense} in the space of ends of $\T_q$:

\begin{proposition}[Big ends dense]\label{prop:big_ends_dense}
Big ends are dense. In other words, every finite rooted path in $\T_q$ can
be extended to an infinite path converging to a big end.
\end{proposition}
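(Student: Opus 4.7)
The plan is to extend the given finite path by always choosing the \emph{critical} extension at every subsequent step, and then verify directly that the resulting sequence of postcritical lengths grows geometrically. Concretely, if the path terminates at a tree polynomial $T$ of depth $n$ with associated $F$-sequence $F$, I extend $F$ indefinitely by the rule $F(m+1)=F(m)+1$ for every $m\ge n$. The remark immediately following Proposition~\ref{prop:F_sequence}, which identifies this as the unique extension in which $F$ of the new critical leaf is the critical child of $F$ of its predecessor, guarantees that this option is always admissible, so the strategy yields a well-defined infinite extension of the path in $\T_q$.

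Let $k_0:=F(n)$ and $d:=n-k_0$, which is positive since $F(n)<n$. A short induction on $j$ using $F(m+1)=F(m)+1$ gives $F(n+j)=k_0+j$ in the extended sequence for every $j\ge 0$. Writing $a_m:=\ell(T_m)$ for the postcritical length at depth $m$, I will use the identity $a_m=\ell(F(m))$, which follows from $\ell(T_m)=\ell(f(c_m))$ together with the fact that $\ell$ is $f$-invariant along ordinary vertices (so iterating $f$ until the first critical vertex preserves $\ell$). The crux of the calculation is then a single line: applying $\ell(i)=2\ell(F(i))$ (valid for $i>0$) with $i=m$, for each $m\ge n$ one has
\[
a_{m+d}=\ell\bigl(F(m+d)\bigr)=\ell\bigl(k_0+(m+d)-n\bigr)=\ell(m)=2\ell(F(m))=2\,a_m,
\]
so the postcritical lengths double every $d$ steps along each residue class modulo $d$. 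Iterating gives $a_{n+rd+j}=2^{r}a_{n+j}$ for $0\le j<d$ and $r\ge 0$, whence
\[
\sum_{m=n}^{\infty}\frac{1}{a_m}=\sum_{j=0}^{d-1}\frac{1}{a_{n+j}}\sum_{r=0}^{\infty}\frac{1}{2^{r}}=2\sum_{j=0}^{d-1}\frac{1}{a_{n+j}}<\infty.
\]
Therefore $\sum_m 1/\ell(T_m)<\infty$ and the infinite path converges to a big end.

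The essential input I rely on is the admissibility of the critical extension at every stage, which is supplied by the remark after Proposition~\ref{prop:F_sequence}; apart from that, the proof reduces to the one-line doubling identity $a_{m+d}=2a_m$ displayed above. Consequently I anticipate no genuine obstacle --- the interesting observation is simply that the greediest strategy (always picking the critical child) already forces the postcritical lengths to double every $d=n-F(n)$ steps, which is more than enough to land at a big end.
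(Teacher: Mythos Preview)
Your proof is correct and follows essentially the same approach as the paper: extend the $F$-sequence by the rule $F(m+1)=F(m)+1$ for all $m\ge n$, observe that $F(m)=m+F(n)-n$, and deduce that the $\ell$-values grow geometrically so that $\sum 1/\ell<\infty$. Your write-up is simply more detailed, making explicit the identity $a_m=\ell(F(m))$ and summing the geometric series by residue classes modulo $d=n-F(n)$, whereas the paper compresses this into a single line.
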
 
\begin{proof}
Let $T$ be a tree polynomial of some finite depth $n$ and let $F$ be the
associated $F$-sequence. Suppose $F(n)=i$. There is a unique infinite 
sequence of extensions of $T$ defined recursively by $F(m+1)=F(m)+1$ for all 
$m\ge n$. Then $F(m)=m+i-n$ for all sufficiently large $m$, so that 
$\ell(m)=2\ell(m+i-n)$ and $\sum 1/\ell(m) < \infty$.
\end{proof}

Let's call an end {\em type $S$} if the associated $F$-sequence
satisfies $F(m+1)=F(m)+1$ for all sufficiently large $m$ (i.e.\/ if it is
of the sort constructed in Proposition~\ref{prop:big_ends_dense}). 
For example, the sequence of maximal type is of type $S$.

\begin{example}[Littlebrot]
The right side of Figure~\ref{littlebrot} 
depicts the second biggest complementary component 
in a B\"ottcher's slice (this is a speck in the northeast corner in Figure~\ref{approx_slice}).
It corresponds to an end of type $S$ with $\ell(n)=2^{\lfloor n/2\rfloor}$.

\begin{figure}[htpb]
\centering
\includegraphics[scale=0.65]{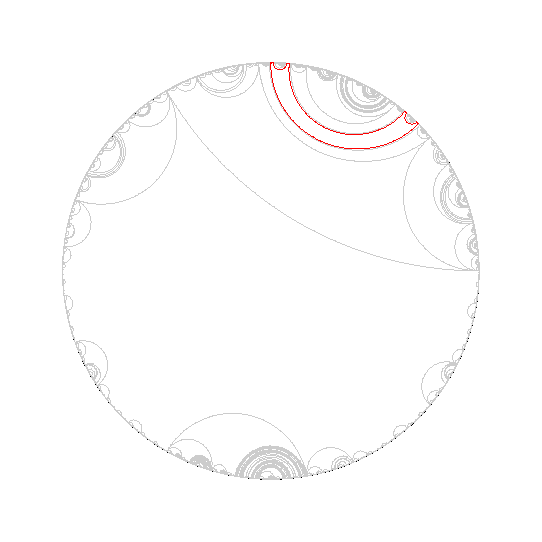} \quad
\includegraphics[scale=0.2]{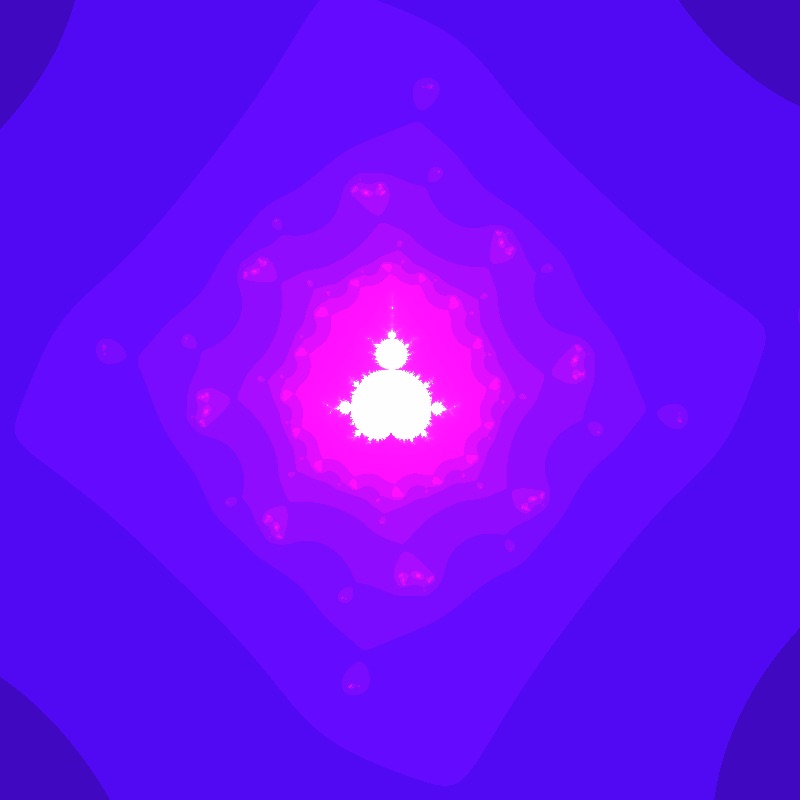}
\caption{A slice through $z^3+pz+2$ of width 0.0003 centered at $1.72572+3.09778i$
and the corresponding component of the tautological lamination}
\label{littlebrot}
\end{figure}
In the normalization $C=\lbrace 0,1/3\rbrace$ 
the base 3 decimal expansions of the points in the subset of $S^1$ associated to this component 
is a regular language in the alphabet $\lbrace 0,1,2\rbrace$ (whose precise description is
somewhat complicated and not very enlightening). Compare with 
Example~\ref{example:maximal_component}.
\end{example}

Branner--Hubbard \cite{Branner_Hubbard_2}, Thm.~9.1 implies (in degree 3, but the same result
should hold in every degree) that every big end is of type $S$
and a component of the complement of $\SS_q$ in a slice has positive diameter 
if and only if it corresponds to a big end of $\T_q$.

\begin{conjecture}
In the normalization $C=\lbrace 0,1/3\rbrace$, every big end corresponds to a subset of 
$S^1$ whose base 3 decimal expansion is a regular language in the alphabet $\lbrace 0,1,2\rbrace$.
\end{conjecture}

\section{Tables of Values}\label{section:tables}

Values of $N_3(n,m)$ for $0 \le n,m \le 12$ are contained in Table~\ref{n3table}.
Values of $N_q(n,m)$ for $0 \le n,m \le 11$ and $q=4,5$ are in Tables~\ref{n4table}
and \ref{n5table}. These tables were computed with the aid of the program 
{\tt taut}, written by Alden Walker.

\begin{table}[ht]
{\small
\centering
\begin{tabular}{c|c c c c c c c c c c c c c} 
 $n\backslash m$ & 0 & 1 & 2 & 3 & 4 & 5 & 6 & 7 & 8 & 9 & 10 & 11 & 12 \\
 \hline
 0 &  1\\ 
 1 &  1 & 1\\
 2 &  3 & 1 & 1\\
 3 &  7 & 6 & 0 & 1\\
 4 &  21 & 16 & 3 & 0 & 1\\
 5 &  57 & 51 & 13 & 0 & 0 & 1\\
 6 &  171 & 149 & 39 & 5 & 0 & 0 & 1\\
 7 &  499 & 454 & 117 & 23 & 0 & 0 & 0 & 1\\
 8 &  1497 & 1348 & 360 & 66 & 9 & 0 & 0 & 0 & 1\\
 9 &  4449 & 4083 & 1061 & 207 & 41 & 0 & 0 & 0 & 0 & 1\\
 10 & 13347 & 12191 & 3252 & 591 & 126 & 17 & 0 & 0 & 0 & 0 & 1\\
 11 & 39927 & 36658 & 9738 & 1799 & 370 & 81 & 0 & 0 & 0 & 0 & 0 & 1\\
 12 & 119781 & 109898 & 29292 & 5351 & 1125 & 240 & 33 & 0 & 0 & 0 & 0 & 0 & 1\\
 \hline
\end{tabular}
}
\vskip 12pt
\caption{Values of $N_3(n,m)$}\label{n3table}
\end{table}

\begin{table}[ht]
{\small
\centering
\begin{tabular}{c|c c c c c c c c c c c c c} 
 $n\backslash m$ & 0 & 1 & 2 & 3 & 4 & 5 & 6 & 7 & 8 & 9 & 10 & 11\\
 \hline
 0 & 1\\
 1 & 2 & 1\\
 2 & 8 & 2 & 1\\
 3 & 28 & 14 & 0 & 1\\ 
 4 & 112 & 52 & 6 & 0 & 1\\
 5 & 432 & 220 & 30 & 0 & 0 & 1\\ 
 6 & 1728 & 872 & 120 & 10 & 0 & 0 & 1\\
 7 & 6856 & 3540 & 472 & 54 & 0 & 0 & 0 & 1\\
 8 & 27424 & 14120 & 1924 & 204 & 18 & 0 & 0 & 0 & 1\\ 
 9 & 109472 & 56744 & 7620 & 828 & 98 & 0 & 0 & 0 & 0 & 1\\ 
 10 & 437888 & 226768 & 30752 & 3212 & 396 & 34 & 0 & 0 & 0 & 0 & 1\\ 
 11 & 1750688 & 908040 & 122852 & 12872 & 1556 & 194 & 0 & 0 & 0 & 0 & 0 & 1\\
 \hline
\end{tabular}
}
\vskip 12pt
\caption{Values of $N_4(n,m)$}\label{n4table}
\end{table}

\begin{table}[ht]
{\small
\centering
\begin{tabular}{c|c c c c c c c c c c c c c} 
 $n\backslash m$ & 0 & 1 & 2 & 3 & 4 & 5 & 6 & 7 & 8 & 9 & 10 & 11\\
 \hline
 0 & 1\\
 1 & 3 & 1\\
 2 & 15 & 3 & 1\\
 3 & 69 & 24 & 0 & 1\\
 4 & 345 & 114 & 9 & 0 & 1\\
 5 & 1695 & 597 & 51 & 0 & 0 & 1\\
 6 & 8475 & 2973 & 255 & 15 & 0 & 0 & 1\\
 7 & 42237 & 15018 & 1245 & 93 & 0 & 0 & 0 & 1\\
 8 & 211185 & 75012 & 6306 & 438 & 27 & 0 & 0 & 0 & 1\\
 9 & 1055235 & 375951 & 31287 & 2199 & 171 & 0 & 0 & 0 & 0 & 1\\
 10 & 5276175 & 1879269 & 157098 & 10767 & 858 & 51 & 0 & 0 & 0 & 0 & 1\\
 11 & 26377485 & 9400644 & 784596 & 53799 & 4230 & 339 & 0 & 0 & 0 & 0 & 0 & 1\\
 \hline
\end{tabular}
}
\vskip 12pt
\caption{Values of $N_5(n,m)$}\label{n5table}
\end{table}

\section{Acknowledgements}

I would like to thank Laurent Bartholdi, Frank Calegari, Eric Rains and Alden Walker;
furthermore I would like to especially thank the anonymous referee for extensive and detailed
feedback on several previous iterations of this article.


\begin{thebibliography}{99}
\bibitem{Branner_Hubbard}
	B. Branner and J. Hubbard,
	\emph{The iteration of cubic polynomials I. The global topology of parameter space},
	Acta Math. {\bf 160} (1988), 143--206
\bibitem{Branner_Hubbard_2}
	B. Branner and J. Hubbard,
	\emph{The iteration of cubic polynomials II. Patterns and parapatterns},
	Acta Math. {\bf 169} (1992), 229--325
\bibitem{Calegari_sausage}
	D. Calegari,
	\emph{Sausages and Butcher Paper},
	Recent Progress in Mathematics, 155--200, Springer Nature Singapore, 2022.
\bibitem{DeMarco_McMullen}
	L. DeMarco and C. McMullen,
	\emph{Trees and the dynamics of polynomials},
	Ann. de l'ENS 4, {\bf 41} (2008), no. 3, 337--383
\bibitem{DeMarco_Pilgrim}
	L. DeMarco and K. Pilgrim,
	\emph{The classification of polynomial basins of infinity},
	Ann. Sci. \'Ecole Norm. Sup. {\bf 50} (2017), 799--877
\bibitem{DeMarco_Schiff}
	L. DeMarco and A. Schiff,
	\emph{Enumerating the basins of infinite of cubic polynomials},
	J. Diff. Eq. Appl. {\bf 16} (2010), no. 5-6, 451--461
\bibitem{Lothaire}
	M. Lothaire,
	\emph{Combinatorics on Words},
	Cambridge University Press, NY, 1997.
\bibitem{Silberger}
	D. Silberger,
	\emph{Borders and roots of a word},
	Portugal Math. {\bf 30} (1971), 191--199
\end{thebibliography}
\end{document}